\newcommand{\luk}{\L u\-ka\-si\-e\-w\-icz}
\newtheorem{theorem}{Theorem}[section]
\newtheorem{corollary}[theorem]{Corollary}
\newtheorem{proposition}[theorem]{Proposition}
\newtheorem{claim}{Claim}
\theoremstyle{definition}
\newtheorem{definition}[theorem]{Definition}
\newtheorem{remark}[theorem]{Remark}
\newtheorem{notation}{Notation}
\begin{document}
\title[Hyperstates of IBP$_0$-algebras]{Hyperstates of involutive MTL-algebras that satisfy $(2x)^2=2(x^2)$}
%\title{Hyperstates on involutive bipartite MTL-algebras: a preliminary investigation}
\author[Tommaso Flaminio]{Tommaso Flaminio}
\address{IIIA - CSIC,
Campus de la Universidad Aut\`onoma de Barcelona s/n
 08193 Bellaterra, Spain.}
\email{tommaso@iiia.csic.es} 

\author[Sara Ugolini]{Sara Ugolini}
\address{Department of Computer Science, University of Pisa, Italy.} 
   \email{sara.ugolini@di.unipi.it}

\date{}

\maketitle
\begin{abstract}
States of MV-algebras have been the object of intensive study and attempts of generalizations. The aim of this contribution is to provide a preliminary investigation for states of prelinear semihoops and hyperstates of algebras in the variety generated by perfect and involutive MTL-algebras (IBP$_0$-algebras for short). Grounding on a recent result showing that IBP$_0$-algebras can be constructed from a Boolean algebra, a prelinear semihoop and a suitably defined operator between them, our first investigation on states of prelinear semihoops will support and justify the notion of hyperstate for IBP$_0$-algebras and will actually show that each such map can be represented by a probability measure on its Boolean skeleton, and a state on a suitably defined abelian $\ell$-group. 
\vspace{.2cm}

\noindent{\bf Keywords}: IBP$_0$-algebras, abelian $\ell$-groups, prelinear semihoop, states of prelinear semihoop, hyperstates.
\end{abstract}

\section{Motivation}\label{sec:intro}
States of MV-algebras have been introduced by Daniele Mundici in \cite{munstates} as averaging processes for truth-values of \luk\ formulas.  These are mappings  of any %functions % $s$ 
%mapping any 
MV-algebra in the real unit interval $[0,1]$ satisfying a normalization condition and a generalized version of the usual additivity law (see \cite{FK15,Mu12} for more details). 
The states of MV-algebras are strongly connected to states of abelian $\ell$-groups \cite{Good} through Mundici's categorical equivalence %showing that
between the category of MV-algebras with homomorphisms 
%is equivalent to
and the category of abelian $\ell$-groups with strong order unit (unital $\ell$-groups) and unit-preserving $\ell$-group homomorphisms \cite{Mun86}. Indeed, if ${\bf A}$ is an MV-algebra and ${\bf G}_{\bf A}$ is its corresponding unital $\ell$-group, then the states of ${\bf A}$ and the states of ${\bf G}_{\bf A}$ are in 1-1 correspondence. 

MV-algebraic states have been widely studied in the last years (cf. \cite{FK15} and \cite{Mu12} for a brief survey), and many attempts have been made to define states to alternative algebraic structures. In particular, the task of defining states on {\em perfect} MV-algebras has been the object of several proposals \cite{DGL,DFL14,DFFG} since the very notion of state given in \cite{munstates} trivializes when applied to these structures. Indeed, every perfect MV-algebra has only one state: the function $s$ mapping its radical,  i.e. the intersection of its maximal filters, $Rad({\bf A})$ in $1$ and its co-radical $coRad({\bf A})$ in $0$ (see \cite{DGL} and Section \ref{sec:MVStates} below for further details). 

In this contribution, mimicking the insights provided by Mundici's categorical equivalence to the study of state theory, we shall define a notion of hyperstate (i.e., hyperreal-valued state) on a wider class of algebras called {\em IBP$_0$-algebras} which properly contains perfect MV-algebras and for which we recently provided a categorical equivalence with respect to a category  whose objects are {\em prelinear-semihoop triples}, that is, systems $({\bf B}, {\bf H}, \vee_e)$ where ${\bf B}$ is a Boolean algebra, ${\bf H}$ is a prelinear semihoop and $\vee_e: B\times H\to H$ is a suitably defined map, intuitively representing the natural join between elements of $B$ and $H$. If $({\bf B}, {\bf H}, \vee_e)$ and $({\bf B}', {\bf H}', \vee_e')$ are two triples, a morphism between them is a pair $(f,g)$ where $f:{\bf B}\to{\bf B}'$ is a Boolean homomorphism, $g:{\bf H}\to{\bf H}'$ is a prelinear semihoop homomorphism, and for every $(b,c)\in B\times H$, $g(b\vee_e c)=f(b)\vee_e' g(c)$.

The definition of hyperstate that we will present and study in the following sections is grounded on the fact that Boolean algebras already possess a well-established notion of {\em state}:
 probability functions. As for prelinear semihoops, we will show in the next section that each of them has a homomorphic image (as abelian $\ell$-monoid) in an abelian $\ell$-group. Thus, taking into account the categorical equivalence between IBP$_0$-algebras and prelinear semihoop triples, our main result will prove that any hyperstate on an IBP$_0$-algebra splits into a probability measure of the Boolean skeleton and a state of the largest prelinear semihoop contained in it. As a consequence, we will prove that if the IBP$_0$-algebra actually belongs to the variety generated by perfect MV-algebras, then its hyperstates  are given by a probability measure on its Boolean skeleton and a state of a suitably defined abelian $\ell$-group. 
 
 The present paper is structured in the following way:  Section \ref{sec:MVStates} is devoted to recalling basic notions and results about abelian $\ell$-groups, MV-algebras, perfect MV-algebras and their states, while in Section \ref{sec:pre} we will prove a first result which partially extends the usual Grothendieck group construction to lattice-ordered monoids. That result and its corollary will be used in Section \ref{sec:Sprelinear} to introduce a suitable notion of state of a prelinear semihoop which, in turn, allows to introduce a notion of hyperstate of IBP$_0$-algebras in Section \ref{sec:hyperstates}. In the same Section \ref{sec:hyperstates} we will prove that every hyperstate of an IBP$_0$-algebra splits into a probability measure on its Boolean skeleton and a state of the largest prelinear semihoop contained in it. We end this paper with Section \ref{conclusion} which is devoted to concluding remarks and future work on this subject.
 
\section{Abelian $\ell$-groups, MV-algebras and their states} \label{sec:MVStates}
An {\em abelian $\ell$-group with strong unit} (or {\em unital $\ell$-group} for short) is a pair
$({\bf G}, u)$ where ${\bf G}$ is an abelian $\ell$-group (see \cite{Good}) and $u\in G$ satisfies the following requirement: for every $x\in G$ there is a natural number $n$ such that $x\leq u+\ldots+u$ where, in the previous expression, $u+\ldots+ u$ is the $n$-times sum of $u$ in ${\bf G}$, and $\leq$ denotes the lattice order of ${\bf G}$. 

A {\em state of an $\ell$-group} ${\bf G}$  is a group homomorphism $\sigma$ to the additive group ${\bf R}$ of reals which further satisfies: for all $x\geq 0$ in $G$, $\sigma(x)\geq 0$ in $\mathbb{R}$. If $({\bf G}, u)$ is a unital $\ell$-group, a state of $({\bf G}, u)$ is any state of ${\bf G}$ such that $\sigma(u)=1$ (see \cite{Good} for further details).   

{\em MV-algebras} can be introduced as those structures ${\bf A}=(A, \oplus, \neg, 0, 1)$ of type $(2,1,0,0)$ for which there exists a unital $\ell$-group $({\bf G}_A, u)$ such that $A=\{x\in G_A\mid 0\leq x\leq u\}$, $x\oplus y=(x+y)\wedge u$, $\neg x=u-x$ and $1=u$. Furthermore, for every MV-algebra ${\bf A}$, the unital $\ell$-group $({\bf G}_A, u)$ is unique. Indeed, the previous construction induces a categorical equivalence, established by Mundici's functor $\Gamma$, between the categories of unital $\ell$-groups with unit-preserving $\ell$-group homomorphisms and that of MV-algebras with MV-homomorphisms \cite{Mun86}. In particular, it is worth noticing that for every morphism $h$ in the category of unital $\ell$-groups, $\Gamma(h)$ is an MV-homomorphism that is defined by restriction.

 The latter construction suggests that we can speak about {\em states} of an MV-algebra ${\bf A}$ restricting any state of $({\bf G}_A, u)$ both in its domain, which thus becomes $A$, and its codomain that, since $1$ is a strong unit for the $\ell$-group $\mathbf{R}$, restricts to the real unit interval $[0,1]$. Indeed, by a state of ${\bf A}$ we mean any map $s: A\to [0,1]$ such that $s(1)=1$ and $s(x\oplus y)=s(x)+s(y)$ for all $x,y\in A$ such that $x+y$ (the group sum) coincides with $x\oplus y$ (the MV-sum) \cite{munstates}. 
 
Although states of MV-algebras resemble finitely additive probability measures on Boolean algebras, they are intimately related with Borel (and hence $\sigma$-additive) regular measures. Indeed, by the Kroupa-Panti Theorem \cite{Kroupa} and \cite{Panti}, for every MV-algebra ${\bf A}$ the set of its states $\mathcal{S}({\bf A})$ is in 1-1 correspondence with the set of  Borel regular measures on the compact and Hausdorff space $Max({\bf A})$ of maximal MV-filters of ${\bf A}$. Precisely, for every state $s$ of ${\bf A}$ there exists a unique Borel regular measure $\mu$ of $Max({\bf A})$ such that $s$ is the Lebesgue integral  w.r.t. $\mu$ (see also \cite{FK15,Mu12} for more details).

Every MV-algebra admits at least one state. However, there are relevant examples of MV-algebras whose unique state is trivial, i.e., it only takes Boolean values, $0$ and $1$. This is the case, for instance, of {\em perfect} MV-algebras \cite{DinLett}. Mimicking the way we used to introduce MV-algebras in general, perfect MV-algebras are, up to isomorphisms, those MV-algebras of the form $\Gamma(\mathbf{Z}\times{\bf G}, (1,0))$ where $\Gamma$ is Mundici's functor, $\mathbf{Z}$ is the $\ell$-group of integers, ${\bf G}$ is any $\ell$-group, $\times$ denotes the lexicographic product between $\ell$-groups (which is an $\ell$-group iff the first component is totally ordered \cite[Example 3]{GH}), and $(1,0)\in \mathbb{Z}\times G$ is indeed a strong unit for $\mathbf{Z}\times{\bf G}$. Again, this construction lifts to a categorical equivalences shown by Di Nola and Lettieri between perfect MV-algebras and $\ell$-groups \cite{DinLett}.  
 
An immediate consequence of the previous definition shows that every perfect MV-algebra has for domain the disjoint union $G^+\cup G^-$ for a unique $\ell$-group ${\bf G}$, where $G^+$ denotes the positive cone of ${\bf G}$, $G^-=\{-x\mid x\in G^+\}$ and $x>y$ for every $x\in G^-$ and $y\in G^+$. Furthermore,  in every perfect MV-algebra ${\bf A}$, displayed as above, $x\oplus x=1$ for every $x\in G^-$ and $y\odot y=0$ for all $y\in G^+$. Therefore, every state $s$ of ${\bf A}$ maps $G^-$ in $1$ and $G^+$ in $0$, whence $s$ is also unique.

In order to overcome this limitation and noticing that $\ell$-groups have more than one trivial state, in \cite{DFL14} the authors introduced the notion of {\em lexicographic states} for a wide class of MV-algebras which includes  perfect algebras. For any algebra  ${\bf A}$ in the variety generated by perfect MV-algebras, a lexicographic state is any map of $A$ to the  MV-algebra $\mathscr{L}(\mathbb{R})=\Gamma(\mathbf{R}\times \mathbf{R}, (1,0))$ satisfying $s(1)=1$, $s(x\oplus y)=s(x)+s(y)$ whenever $x\odot y=0$ and such that the restriction of $s$ to its maximal semisimple quotient, is a state in its usual sense.  \cite[Corollary 6.7]{DFL14} shows that the class of lexicographic states of a perfect MV-algebra $\Gamma(\mathbf{Z}\times \mathbf{G}, (1,0))$ is in one-one correspondence with the class of states of $\mathbf{G}$.   
 
 As we shall see in Section \ref{sec:Sprelinear} the previous definition is a particular case of a more general construction that we will exhibit along this paper.
 
 In what follows we shall provide preliminary results which will help us, in Section \ref{sec:Sprelinear}, to provide a reasonable notion of {\em state} for a class of algebras, called prelinear semihoops,  which play a crucial role in this paper. 
 In particular, following the same lines that we recalled in this section, our axiomatization for states of a prelinear semihoop will follow directly by Goodearl's definition of state of an $\ell$-group.
\section{From $\ell$-monoids to $\ell$-groups and hoops}\label{sec:pre}
In this section we will prove and recall some basic results we shall need in the rest of this paper. As to begin with, let us recall that a {\em lattice-ordered monoid} ({\em $\ell$-monoid} for short) is a structure ${\bf M}=(M, +, \wedge, \vee, 0)$ such that $(M, +, 0)$ is a commutative monoid, $(M, \wedge, \vee)$ is a lattice, and  the following distribution laws hold for all $x,y,z \in M$:
\begin{itemize}
\item[(D1)] $x+(y\wedge z)=(x+ y)\wedge (x+ z)$,
\item[(D2)] $x+(y\vee z)=(x+ y)\vee (x+ z)$.
\end{itemize}
The following result, that we need to reprove completely, is an extension of the usual Grothendieck group construction to the case of lattice-ordered, and in general not cancellative, monoids. Assuming cancellativity, an analogous construction has been used, for instance, in \cite[\S 2.4]{CDM00} and \cite{DinLett}.
\begin{theorem}\label{Thm:Monoid}
Let ${\bf M}=(M, +, \wedge, \vee, 0)$ be a lattice-ordered monoid. Then, there is an abelian $\ell$-group ${\bf K}({\bf M})$ and a $\ell$-monoid homomorphism $h: {\bf M}\to {\bf K}({\bf M})$ which is injective iff ${\bf M}$ is cancellative.
%and a map $h:M\to K({\bf M})$ such that:
%\begin{itemize}
%\item[(i)] $h$ is a $\ell$-monoid homomorphism which is injective iff ${\bf M}$ is cancellative. 
%\item[(ii)] $h$ is universal, that is, for any other $\ell$-monoid homomorphism $k: {\bf M}\to {\bf G}$ into an abelian $\ell$-group ${\bf G}$, there exists a unique $\ell$-group homomorphism $l:{\bf K}({\bf M})\to {\bf G}$ such that $k=l\circ h$.
%\end{itemize}
\end{theorem}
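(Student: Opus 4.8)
The plan is to adapt the classical Grothendieck construction while carrying the lattice structure along at every step. First I would build the abelian group in the usual way: on $M\times M$ put the relation $(a,b)\sim(c,d)$ iff $a+d+e=b+c+e$ for some $e\in M$, check it is a monoid congruence for coordinatewise addition, and let $\mathbf{K}(\mathbf{M})$ have underlying set the quotient $(M\times M)/{\sim}$ with addition $[(a,b)]+[(c,d)]=[(a+c,b+d)]$, zero $[(0,0)]$, and inverse $-[(a,b)]=[(b,a)]$. The map $h(a)=[(a,0)]$ is then a monoid homomorphism, and $h(a)=h(b)$ holds iff $a+e=b+e$ for some $e$; this immediately settles the injectivity claim, since injectivity of $h$ is exactly the assertion that $a+e=b+e$ implies $a=b$, i.e. that $\mathbf{M}$ is cancellative. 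This equivalence is the routine part of the argument.

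The substance is to endow $\mathbf{K}(\mathbf{M})$ with a lattice order making it an $\ell$-group with $h$ an $\ell$-monoid homomorphism. Reading each class $[(a,b)]$ as a formal difference $a-b$ and bringing two differences to a common second coordinate, I would define
\[
[(a,b)]\vee[(c,d)] := [\,(a+d)\vee(c+b),\; b+d\,], \qquad [(a,b)]\wedge[(c,d)] := [\,(a+d)\wedge(c+b),\; b+d\,],
\]
using the lattice operations of $\mathbf{M}$ on the first coordinate, and I would then verify that these do not depend on the chosen representatives.

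The main obstacle, and the reason the hypotheses (D1) and (D2) are indispensable, is precisely this well-definedness together with the verification of the $\ell$-group identities. If $(a,b)\sim(a',b')$ is witnessed by $a+b'+e=a'+b+e$, then applying (D2) to distribute the common summand $b'+d+e$ over the join gives
\[
[(a+d)\vee(c+b)] + (b'+d+e) = (a+b'+e+2d)\vee(c+b+b'+d+e),
\]
while the symmetric computation for the primed representative produces $(a'+b+e+2d)\vee(c+b'+b+d+e)$; the two right-hand sides agree termwise by the witnessing equation and commutativity, which is exactly the instance of $\sim$ required (and symmetrically in the second argument). The dual computation with (D1) handles $\wedge$.

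Once well-definedness is secured, the lattice axioms (idempotency, commutativity, associativity, absorption) transfer from $\mathbf{M}$ by the same common-denominator device, and the translation-invariance laws $x+(y\wedge z)=(x+y)\wedge(x+z)$ and $x+(y\vee z)=(x+y)\vee(x+z)$ in $\mathbf{K}(\mathbf{M})$ reduce, after clearing denominators, once more to (D1) and (D2) in $\mathbf{M}$; together these make $\mathbf{K}(\mathbf{M})$ an abelian $\ell$-group. Finally $h(a\wedge b)=[(a\wedge b,0)]=h(a)\wedge h(b)$ and likewise for $\vee$ are immediate from the definitions, so $h$ is an $\ell$-monoid homomorphism, completing the argument.
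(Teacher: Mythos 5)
Your proposal is correct and follows essentially the same route as the paper: the Grothendieck group of formal differences, lattice operations obtained by bringing two differences to a common second coordinate, and (D1)/(D2) used to push the common summand through meets and joins, both for well-definedness and for translation-invariance. The only difference is one of bookkeeping: the paper first defines the order $[x_1,y_1]\leq[x_2,y_2]$ iff $z+x_1+y_2\leq_M z+y_1+x_2$ for some $z$ and shows its displayed formulas compute least upper and greatest lower bounds (so the lattice identities come for free), whereas you verify well-definedness and the lattice identities equationally; note that your join $[(a+d)\vee(c+b),\,b+d]$ and the paper's $[a+c,\,(a+d)\wedge(c+b)]$ name the same join, and your $h(a)=[a,0]$ is the same class as the paper's $h(a)=[a+a,a]$.
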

\begin{proof}
Starting from a commutative monoid  ${\bf M}$ it is possible to define the Grothendieck group of ${\bf M}$, namely ${\bf K}({\bf M})$ (see \cite[Chapter II]{We}), by means the following construction.
Consider the equivalence relation on the cartesian product $M \times M$ given by $(x, y) \sim (x', y')$ if there exists $z \in M$ such that $z + x + y' = z + x' + y$. 
Let $K(M) = M \times M / \mathord\sim$, and for every $[x, y], [x', y'] \in K(M)$, let $[x, y] \hat{+} [x', y'] = [x + x', y + y']$.
Let $[0,0]$ be the identity and  let the inverse of $[x, y]$ be $-[x, y] = [y, x]$. Then ${\bf K}({\bf M}) = (K(M), \hat{+}, -, [0,0])$ is an abelian group, and it satisfies the universal property: there exist a monoid homomorphism $h$ such that for any other monoid homomorphism $k: {\bf M}\to {\bf G}$ into an abelian group ${\bf G}$, there exists a unique group homomorphism $l:{\bf K}({\bf M})\to {\bf G}$ such that $k=l\circ h$. In particular,  $h(x) = [x + x, x]$ for every $x \in M$.
Moreover, the homomorphism $h$ is injective iff ${\bf M}$ is cancellative (cf. for instance \cite{B67}).

Now, we are going to prove that if ${\bf M}$ is lattice-ordered, it is possible to define on ${\bf K}({\bf M})$ an $\ell$-group structure such that the claim holds. % (i) and (ii) hold.
First, denoting with $\leq_{M}$ the lattice order of ${\bf M}$, let 
\begin{center}
$[x_{1}, y_{1}] \leq [x_{2}, y_{2}]$ if $\,\exists z: z + x_{1} + y_{2} \leq_{M} z + y_{1} + x_{2}$.
\end{center}
It is easy to see that it is a partial order on ${\bf K}({\bf M})$, for instance let us prove transitivity.
If $[x_{1}, y_{1}] \leq [x_{2}, y_{2}]$ and $[x_{2}, y_{2}] \leq [x_{3}, y_{3}]$, then by definition $\,\exists z: z + x_{1} + y_{2} \leq_{M} z + y_{1} + x_{2}$ and $\,\exists z': z' + x_{2} + y_{3} \leq_{M} z' + y_{2} + x_{3}$. By monotonicity, $z + x_{1} + y_{2} + z' + x_{2} + y_{3}\leq_{M} z + y_{1} + x_{2} + z' + y_{2} + x_{3}$, and putting $z'' = z + z' + x_{2} + y_{2} \in M$, we have that $z'' + x_{1} + y_{3} \leq_{M} z'' + y_{1} + x_{3}$, thus $[x_{1}, y_{1}] \leq [x_{3}, y_{3}]$ and $\leq$ is transitive.

Let us now define lattice operations with respect to the order $\leq$: $$[x_{1}, y_{1}] \sqcup [x_{2}, y_{2}] = [x_{1} + x_{2}, (x_{1} + y_{2}) \land (x_{2} + y_{1})],$$ $$[x_{1}, y_{1}] \sqcap [x_{2}, y_{2}] = [(x_{1} + y_{2}) \land (x_{2} + y_{1}), y_{1} + y_{2}].$$ We prove that $\sqcup$ is the join, the proof that $\sqcap$ is the meet being similar. First we prove that it is an upper bound, that is, $[x_{1}, y_{1}], [x_{2}, y_{2}] \leq [x_{1} + x_{2}, (x_{1} + y_{2}) \land (x_{2} + y_{1})]$. Indeed:
$x_{1} + (x_{1} + y_{2}) \land (x_{2} + y_{1}) \leq x_{1} + (x_{2} + y_{1})$ and similarly 
$x_{2} + (x_{1} + y_{2}) \land (x_{2} + y_{1}) \leq x_{2} + (x_{1} + y_{2})$.
Now we shall prove that it is the least upper bound, that is, for any $[x_{3}, y_{3}] \in K(M)$, if $[x_{1}, y_{1}], [x_{2}, y_{2}] \leq [x_{3}, y_{3}]$, then $[x_{1} + x_{2}, (x_{1} + y_{2}) \land (x_{2} + y_{1})] \leq [x_{3}, y_{3}]$. By hypothesis, $\,\exists z: z + x_{1} + y_{3} \leq_{M} z + y_{1} + x_{3}$ and $\,\exists z': z + x_{2} + y_{3} \leq_{M} z' + y_{2} + x_{3}$. Thus we get
$$z + z' + x_{1} + x_{2} + y_{3} \leq_{M} z + z' + x_{3} + y_{1} + x_{2}, \mbox{ and}$$
$$z + z' + x_{1} + x_{2} + y_{3} \leq_{M} z + z' + x_{3} + y_{2} + x_{1}$$ 
Hence $(z + z' + x_{1} + x_{2} + y_{3}) \land (z + z' + x_{1} + x_{2} + y_{3}) \leq_{M} (z + z' + x_{3} + y_{1} + x_{2}) \land (z + z' + x_{3} + y_{2} + x_{1})$
and since $\land$ distributes over $+$, we obtain that $z + z' + (x_{1} + x_{2}) + y_{3} \leq_{M} z + z' + x_{3} + (y_{1} + x_{2}) \land (y_{2} + x_{1})$, which means exactly that $[x_{1} + x_{2}, (x_{1} + y_{2}) \land (x_{2} + y_{1})] \leq [x_{3}, y_{3}]$.

In order to prove that ${\bf K}({\bf M})$ with $\sqcup, \sqcap$ is an $\ell$-group, we need to show that $\hat{+}$ distributes over $\sqcup$, that is:
$$ [x_{1}, y_{1}] \hat{+} ([x_{2}, y_{2}] \sqcup [x_{3}, y_{3}]) = ([x_{1}, y_{1}] \hat{+} [x_{2}, y_{2}] ) \sqcup ([x_{1}, y_{1}] \hat{+} [x_{3}, y_{3}])$$
Now, $[x_{1}, y_{1}] \hat{+} ([x_{2}, y_{2}] \sqcup [x_{3}, y_{3}]) = [x_{1}, y_{1}] \hat{+} [x_{2} + x_{3}, (y_{2} + x_{3}) \land (y_{3} + x_{2})] = [x_{1} + x_{2} + x_{3}, y_{1} +  (y_{2} + x_{3}) \land (y_{3} + x_{2})]$, while $([x_{1}, y_{1}] \hat{+} [x_{2}, y_{2}] ) \sqcup ([x_{1}, y_{1}] \hat{+} [x_{3}, y_{3}]) = [x_{1} + x_{2}, y_{1} + y_{2}] \sqcup [x_{1} + x_{3}, y_{1} + y_{3}] = [ x_{1} + x_{2} + x_{1} + x_{3}, ( y_{1} + y_{2} + x_{1} + x_{3}) \land ( y_{1} + y_{3} + x_{1} + x_{2})]$. It is easy to see that $[ x_{1} + x_{2} + x_{1} + x_{3}, ( y_{1} + y_{2} + x_{1} + x_{3}) \land ( y_{1} + y_{3} + x_{1} + x_{2})] = [x_{1} + x_{2} + x_{3}, y_{1} +  (y_{2} + x_{3}) \land (y_{3} + x_{2})]$, since $x_{1} + x_{2} + x_{3} + x_{1} +  ( y_{1} + y_{2} + x_{1} + x_{3}) \land ( y_{1} + y_{3} + x_{2}) = x_{1} + x_{2} + x_{3} + ( x_{1} + y_{1} + y_{2} + x_{3}) \land ( y_{1} + y_{3} + x_{1} + x_{2}) $, which settles the claim.

In order to conclude the proof %prove (i) and (ii) 
we need to prove that the monoid homomorphism $h : {\bf M} \to {\bf K} ({\bf M})$, $h(x) = [x + x, x]$, is also a lattice homomorphism. Let us prove that it respects the meet operation, the proof for the join being similar. We need to show that $h(x \land y) = h(x) \sqcap h(y)$, that is to say, $$ [(x \land y) + (x \land y), x \land y] = [x + x, x] \sqcap [y + y, y].$$
Now, $[x + x, x] \sqcap [y + y, y] = [(x + x + y) \land (y + y + x), x + y]$. Since $(x + y) + (x \land y) = (x  + y + x) \land (x + y + y)$, we have that $(x \land y) + (x \land y) + x + y = (x  + y + x) \land (x + y + y) + (x \land y)$, which proves the claim. %, and hence (i) holds.
%
%In order to prove (ii), we need to show that for any $\ell$-monoid homomorphism $k: {\bf M}\to {\bf G}$ into an abelian $\ell$-group ${\bf G}$, there exists a unique $\ell$-group homomorphism $l:{\bf K}({\bf M})\to {\bf G}$ such that $k=l\circ h$. We already know that there exists such a unique group homomorphism $l:{\bf K}({\bf M})\to {\bf G}$, namely it is the map defined by $l[x, y] = k(x - y)$ {\color{red}(GIUSTO?)}, and we need to prove that it also preserves lattice operations. Let us check that $l$ preserves the meet, the proof for the join being similar.
%We need to show that $l([x_{1}, y_{1}] \sqcap [x_{2}, y_{2}]) = l([x_{1}, y_{1}]) \sqcap l([x_{2}, y_{2}])$.
%Now, $l([x_{1}, y_{1}] \sqcap [x_{2}, y_{2}]) = l([(x_{1} + y_{2}) \land (x_{2} + y_{1}), y_{1} + y_{2}]) = k(x_{1} + y_{2}) \land (x_{2} + y_{1}) - y_{1} + y_{2}) = (k(x_{1}) + k(y_{2})) \land (k(x_{2}) + k(y_{1})) - ( k(y_{1}) + k(y_{2})) = k(x_{1}) + k(y_{2}) - k(y_{1}) - k(y_{2}) \land k(x_{2}) + k(y_{1}) - k(y_{1}) + k(y_{2}) = k(x_{1})  - k(y_{1})  \land k(x_{2}) + k(y_{2})$, which is exactly $k(x_{1} - y_{1}) \land k(x_{2} - y_{2}) = l([x_{1}, y_{1}]) \sqcap l([x_{2}, y_{2}])$ and the proof is settled.
\end{proof}
\begin{definition}[\cite{EGHM}]
An algebra ${\bf H}=(H, \cdot, \to, \wedge, 1)$ of type $(2,2,2,0)$ is a {\em semihoop} if it satisfies the following conditions:
\begin{itemize}
\item[(i)] $(H, \wedge, 1)$ is an inf-semilattice with upper bound;
\item[(ii)] $(H, \cdot, 1)$ is a commutative monoid isotonic with respect to the inf-semilattice order;
\item[(iii)] For every $c_1, c_2\in H$, $c_1\leq c_2$ iff $c_1\to c_2=1$;
\item[(iv)] For every $c_1, c_2, c_3\in H$, $(c_1\cdot c_2)\to c_3=c_1\to(c_2\to c_3)$.
\end{itemize}
Furthermore, a semihoop ${\bf H}$ is said to be  {\em prelinear}  if it satisfies:
\begin{itemize}
\item[(Pre)] $(x\to y)\to z\leq ((y\to x)\to z)\to z$. 
\end{itemize}

A prelinear semihoop is said to be a {\em basic hoop} if it satisfies:
\begin{itemize}
\item[(Div)] $x \cdot (x \to y) = y \cdot (y \to x)$. 
\end{itemize}
A basic hoop is said to be {\em cancellative} if the following holds:
\begin{itemize}
\item[(Canc)] $(x\to (x\cdot y))\to y =1$.
\end{itemize}
\end{definition}
The class of prelinear semihoops forms a variety that we will denote by $\mathbb{PSH}$, while $\mathbb{BH}$ and $\mathbb{CH}$ denote the variety of basic hoops and cancellative hoops respectively.
\begin{remark}
In a semihoop ${\bf H}$ we can always define a {\em pseudo-join} as follows: $$x \lor y = ((x \to y) \to y) \land ((y \to x) \to x),$$ and $(H, \land, \lor, 1)$ is a lattice iff $\lor$ is associative. Furthermore, in a prelinear semihoop ${\bf H}$, the pseudo-join $\lor$ is the join operation on ${\bf H}$, thus $(H, \land, \lor, 1)$ is a lattice \cite[Lemma 3.9]{EGHM}. Henceforth we will include the $\lor$ in the signature of prelinear semihoops.
\end{remark}
The following result is a direct consequence of Theorem \ref{Thm:Monoid} plus the observation that the reduct ${\bf \hat{H}}=(H, \cdot, \wedge,\vee, 1)$ of a prelinear semihoop ${\bf H}$ is an $\ell$-monoid (written in multiplicative form). In the following result and in the rest of this paper, if $h:{\bf M}\to {\bf K}({\bf M})$  is an $\ell$-monoid homomorphism, we shall denote by ${\bf J}_{h[M]}$ the $\ell$-subgroup of ${\bf K}({\bf M})$ generated by $h[M]=\{h(a)\mid a\in M\}$.
\begin{corollary}\label{-Lattice}
For every prelinear semihoop ${\bf H}$ there is an abelian $\ell$-group ${\bf K}({\bf \hat{H}})$ and a $\ell$-monoid homomorphism $h: {\bf H} \to {\bf K}({\bf \hat{H}})$ which is injective iff ${\bf H}$ is cancellative. Furthermore, for every $x\in {\bf K}({\bf \hat{H}})$, there exists a $y\in {\bf J}_{h[H]}$ such that $y\leq x$. Consequently, for every $x\in {\bf K}({\bf \hat{H}})$, there exists a $y'\in {\bf J}_{h[H]}$ such that $y'\geq x$.
\begin{proof}
The first part directly follows from Theorem \ref{Thm:Monoid}. As for the second part, let $[a,b]$ be a generic element of ${\bf K}({\bf \hat{H}})$ and let, for every $x\in \hat{H}$, $h(x)=[x\cdot x, x]$. 
%Recall, form the proof of Theorem \ref{Thm:Monoid} that for every $x\in \hat{H}$, $h(x)=[x\cdot x, x]$. Thus, take $[(a\wedge b) \cdot (a\wedge b), a\vee b]\in {\bf J}_{h[H]}$. 
Thus, $a\geq a\wedge b$ and hence $a \cdot (a\wedge b)\geq (a\wedge b)\cdot (a\wedge b)\geq (a\wedge b)\cdot (a\wedge b) \cdot b$, since in every prelinear semihoop $z\geq z\cdot k$. Thus, by definition of $\leq$ in ${\bf K}({\bf \hat{H}})$, $[a,b]\geq [(a\wedge b) \cdot (a\wedge b), a\wedge b]=h(a\wedge b)\in {\bf J}_{h[H]}$. Obviously, since every element of ${\bf J}_{h[H]}$ can be equivalently displayed as $-[c,d]$ for some $[c,d]\in {\bf J}_{h[H]}$ and since $-$ reverses the order, there is a $y\in {\bf J}_{h[H]}$ such that $[c,d]\geq y'$, whence $-[c,d]\leq -y\in {\bf J}_{h[H]}$. 
\end{proof}
% such that:
%\begin{itemize}
%\item[(i)] $h$ is a $\ell$-monoid homomorphism. Moreover $h$ is injective iff ${\bf H}$ is cancellative. 
%\item[(ii)] $h$ is universal, that is, for any other $\ell$-monoid homomorphism $k: {\bf H}\to {\bf G}$ into an abelian $\ell$-group ${\bf G}$, there exists a unique $\ell$-group homomorphism $l:{\bf K}({\bf \hat{H}})\to {\bf G}$ such that $k=l\circ h$.
%\end{itemize}
\end{corollary}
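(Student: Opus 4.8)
The first assertion is a direct instance of Theorem~\ref{Thm:Monoid}. Since the reduct $\hat{\mathbf{H}}=(H,\cdot,\wedge,\vee,1)$ is a (multiplicatively written) $\ell$-monoid, the theorem supplies the abelian $\ell$-group $\mathbf{K}(\hat{\mathbf{H}})$ together with the $\ell$-monoid homomorphism $h$, and the injectivity clause carries over unchanged: $h$ is injective exactly when the monoid reduct $\hat{\mathbf{H}}$ is cancellative, which is the meaning of ``$\mathbf{H}$ cancellative'' here. Hence the real content of the statement is the cofinality and coinitiality of $\mathbf{J}_{h[H]}$ in $\mathbf{K}(\hat{\mathbf{H}})$, and my plan is to establish the existence of a lower bound first and then obtain the upper bound by a formal duality.

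For the lower-bound claim I would write an arbitrary element of $\mathbf{K}(\hat{\mathbf{H}})$ as $[a,b]$ with $a,b\in H$ and take as witness $y=h(a\wedge b)=[(a\wedge b)\cdot(a\wedge b),\,a\wedge b]$. This already lies in $h[H]\subseteq\mathbf{J}_{h[H]}$, so it only remains to verify $h(a\wedge b)\leq[a,b]$. Unwinding the definition of $\leq$ on $\mathbf{K}(\hat{\mathbf{H}})$ and choosing the auxiliary element to be $1$, this reduces to the single inequality
\[
(a\wedge b)\cdot(a\wedge b)\cdot b \;\leq_{M}\; a\cdot(a\wedge b)
\]
in $\hat{\mathbf{H}}$, which is the only place where a genuine computation is needed.

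That inequality I would derive from two standard features of prelinear semihoops. By integrality (every element lies below the top $1$, so $z\cdot k\leq_{M}z$) we get $(a\wedge b)\cdot(a\wedge b)\cdot b\leq_{M}(a\wedge b)\cdot(a\wedge b)$; by monotonicity of $\cdot$ applied to $a\wedge b\leq_{M}a$ we get $(a\wedge b)\cdot(a\wedge b)\leq_{M}a\cdot(a\wedge b)$. Chaining these two inequalities yields the displayed one, hence $h(a\wedge b)\leq[a,b]$.

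The upper bound I would obtain purely formally. As $\mathbf{K}(\hat{\mathbf{H}})$ is a group and $\mathbf{J}_{h[H]}$ an $\ell$-subgroup, both are closed under $-$, and $-$ reverses $\leq$; so applying the lower-bound result to $-x$ gives $y\in\mathbf{J}_{h[H]}$ with $y\leq-x$, and then $y'=-y\in\mathbf{J}_{h[H]}$ satisfies $y'\geq x$. There is no serious obstacle here: the whole proof collapses to one short inequality. The point demanding care is the direction of the order, since in the multiplicative presentation $1$ is the top element and $\cdot$ is order-decreasing, so the inequalities and the choice of the auxiliary element $z=1$ run contrary to the additive $\ell$-group intuition.
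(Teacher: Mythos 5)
Your proposal is correct and follows essentially the same route as the paper: the same witness $h(a\wedge b)$ for the lower bound, the same reduction (with auxiliary element $1$) to the inequality $(a\wedge b)\cdot(a\wedge b)\cdot b\leq a\cdot(a\wedge b)$ derived from integrality and monotonicity, and the same duality via $-$ for the upper bound. If anything, your explicit statement of which instance of the order definition is being used, and your phrasing of the duality step (apply the lower bound to $-x$), is cleaner than the paper's.
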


%Hence, by virtue of that term-equivalence,  in every prelinear semihoop ${\bf H}$ an operation of join $\vee$ is definable. Among other things this fact allows to express (Pre) in its usual formulation $(x\to y)\vee (y\to x)=1$ (see e.g., \cite{H98}). 

\section{States of prelinear semihoops}\label{sec:Sprelinear}
In this section we will introduce states of prelinear semihoops and we will show some basic properties.
The following definition naturally arises by following the same lines that inspired the axiomatization of state of an MV-algebra (recall Section \ref{sec:MVStates}), from Corollary \ref{-Lattice} and recalling  that a state of an $\ell$-group ${\bf G}$ is a map $\sigma:G\to\mathbb{R}$ which is a group homomorphism, and such that, if $x\geq 0$, then $\sigma(x)\geq 0$. 
\begin{definition}\label{hoop-states}
A {\em state} of a prelinear semihoop ${\bf H}=(H, \cdot, \to, \wedge, \lor, 1)$ is a map $w: H\to \mathbb{R}^-$ satisfying the following conditions:
\begin{itemize}
\item[(v1)] $w(1)=0$,
\item[(v2)] $w(x\cdot y)=w(x)+w(y)$,
\item[(v3)] if $x\leq y$, then $w(x)\leq w(y)$.
%\item[(v3)] $w(x\wedge y)+w(x\vee y)=w(x)+w(y)$.
\end{itemize}
\end{definition}
Given any prelinear semihoop ${\bf H}$ we denote by $\mathcal{W}({\bf H})$  the set of its states. Notice that $\mathcal{W}({\bf H})$ is not empty. Indeed, letting $x\ominus y=\min \{0, x-y\}$ on $\mathbb{R}^-$,  ${\bf R}^-=(\mathbb{R}^-, +, \ominus, \leq, 0)$ is a prelinear semihoop, and any hoop-homomorphism of ${\bf H}$ to ${\bf R}^-$ is a state.
%\begin{remark}[Cancellative hoops]
%and groups
%\end{remark}
%By virtue of the term-equivalence between prelinear semihoops and GMTL-algebras that we recalled in Section \ref{sec:pre}, we can equivalently present a prelinear semihoop ${\bf H}$ in the signature $(\cdot, \to, \wedge,\vee, 1)$. Indeed, for every $x,y\in H$, $x\vee y=(x\to y)\to y\wedge (y\to x)\to x$. This fact allows to express (Pre) in its usual formulation $(x\to y)\vee (y\to x)=1$ (see e.g., \cite{H98}).
\begin{proposition}\label{prop:statesSemihoops}
For every prelinear semihoop ${\bf H}$ and for every $w\in \mathcal{W}({\bf H})$, the following hold:
\begin{enumerate}
\item $w(x\wedge y)+w(x\vee y)=w(x)+ w(y)$,
\item if ${\bf H}$ is a basic hoop, then $w(x)+w(x\to y)=w(y)+w(y\to x)$,
\item if ${\bf H}$ is divisible, then (v3) is redundant.
\end{enumerate}
\end{proposition}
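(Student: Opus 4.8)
The plan is to dispatch the three items separately, deriving each from the defining conditions (v1)--(v3) together with elementary semihoop identities, and exploiting in an essential way that a state takes values in $\mathbb{R}^-$.

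For item (1) I would first record the $\ell$-monoid identity $(x\wedge y)\cdot(x\vee y)=x\cdot y$, which holds in every prelinear semihoop. The cleanest justification is that the variety $\mathbb{PSH}$ is generated by its totally ordered members, so it suffices to check this equation on a chain, where either $x\leq y$ or $y\leq x$ and the identity is immediate; being an equation, it then transfers to ${\bf H}$ itself. Applying (v2) twice then gives
$$w(x\wedge y)+w(x\vee y)=w\bigl((x\wedge y)\cdot(x\vee y)\bigr)=w(x\cdot y)=w(x)+w(y),$$
which is exactly (1).

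For item (2), since ${\bf H}$ is a basic hoop it satisfies (Div), namely $x\cdot(x\to y)=y\cdot(y\to x)$. Applying $w$ to both sides and using (v2) on each of the two products immediately yields $w(x)+w(x\to y)=w(y)+w(y\to x)$. For item (3), assume ${\bf H}$ is divisible and $x\leq y$; I want to deduce $w(x)\leq w(y)$ using only (v2) and the fact that the codomain is $\mathbb{R}^-$. By condition (iii) of the definition of semihoop, $x\leq y$ forces $x\to y=1$, so (Div) specialises to $x=x\cdot 1=x\cdot(x\to y)=y\cdot(y\to x)$. Applying (v2) gives $w(x)=w(y)+w(y\to x)$, and since $w(y\to x)\in\mathbb{R}^-$ we have $w(y\to x)\leq 0$, whence $w(x)\leq w(y)$. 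Thus (v3) is a consequence of (v1), (v2) and the choice of codomain, as claimed.

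The only genuinely delicate point is the identity used in (1): although it is trivial on chains, transferring it to an arbitrary ${\bf H}$ requires invoking the subdirect representation of prelinear semihoops by totally ordered ones (see \cite{EGHM}). One could alternatively push $x$ and $y$ through the $\ell$-monoid homomorphism $h$ of Corollary \ref{-Lattice} and use the $\ell$-group law $(a\wedge b)+(a\vee b)=a+b$ in ${\bf K}({\bf \hat H})$; but recovering the identity back in ${\bf H}$ that way would demand the injectivity of $h$, i.e. cancellativity, which need not hold, so the chain argument is to be preferred.
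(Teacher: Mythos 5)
Your proposal is correct and follows essentially the same route as the paper: item (1) via the identity $x\cdot y=(x\wedge y)\cdot(x\vee y)$, verified on chains and transferred using that $\mathbb{PSH}$ is generated by its totally ordered members, then (v2); item (2) by applying $w$ to (Div); and item (3) by noting $x\leq y$ gives $x=y\cdot(y\to x)$, so $w(x)=w(y)+w(y\to x)\leq w(y)$ since $w$ is $\mathbb{R}^-$-valued. Your closing remark about why the detour through ${\bf K}({\bf \hat H})$ would need cancellativity is a sensible observation, and the chain argument you prefer is exactly the one the paper uses.
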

\begin{proof} (1). 
The variety $\mathbb{PSH}$ is generated by its linearly ordered members, and in every totally ordered prelinear semihoop $x\cdot y=(x\wedge y)\cdot (x\vee y)$. Thus, the latter equation holds in every ${\bf H}\in \mathbb{PSH}$. Therefore, for every $w\in\mathcal{W}({\bf H})$, $w(x)+w(y)=w(x\cdot y)=w((x\wedge y)\cdot (x\vee y))=w(x\wedge y)+ w(x\vee y)$ where the last equality follows from (v2). 
\vspace{.2cm}

\noindent (2). Immediate from {\em (Div)} and {\em (v2)}.

\vspace{.2cm}

\noindent (3). Let ${\bf H}$ be divisible and assume that $x\leq y$. Then, $x=x\wedge y=y\cdot(y\to x)$ and hence  $w(x)=w(y\cdot (y\to x))=w(y)+w(y\to x)\leq w(y)$ where the last inequality holds since, by definition, $w(y),w(y\to x)\in \mathbb{R}^-$.
\end{proof}
\begin{remark}
In any prelinear semihoop $(H, \cdot, \to, \wedge, \vee, 1)$, the reduct $(H, \wedge, \vee)$ is a distributive lattice and indeed Proposition \ref{prop:statesSemihoops} (1) above shows that states of prelinear semihoops are {\em valuations} on their lattice reduct as defined by Birkhoff in \cite{B67}.

Furthermore, if ${\bf H}$ is a basic hoop, then Proposition \ref{prop:statesSemihoops}  (2) shows that $w$ satisfies Bosbach equation $w(x)+w(x\to y)=w(y)+w(y\to x)$. Thus, every state of a basic hoop can be seen as a {\em Bosbach state} in the sense of \cite{HZX}. 
%This short observation serves as justification for the chosen notation.
\end{remark}
For the next result, recall how the  $\ell$-group ${\bf K}({\bf \hat{H}})$ and the $\ell$-monoid homomorphism $h$ are defined in Theorem \ref{Thm:Monoid} and Corollary \ref{-Lattice}.
\begin{proposition}\label{prop:stateHoopsGropus}
For every prelinear semihoop ${\bf H}$ and every $w\in \mathcal{W}({\bf H})$,  there is a state $\sigma$ of the abelian $\ell$-group ${\bf K}({\bf \hat{H}})$ such that $w=\sigma\circ h$. Conversely, if $\sigma$ is a state of ${\bf K}({\bf \hat{H}})$, then the composition map $w=\sigma\circ h$ is a state of ${\bf H}$.
\end{proposition}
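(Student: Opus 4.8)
The plan is to exploit the universal property of the Grothendieck group ${\bf K}({\bf \hat H})$ together with the fact that ${\bf R}^-$ sits inside the additive $\ell$-group $\mathbf{R}$ of reals. For the forward direction, I would start with a state $w\in\mathcal W({\bf H})$, viewed as an $\ell$-monoid homomorphism $w:{\bf \hat H}\to {\bf R}^-$. Since ${\bf R}^-$ is the negative cone of $\mathbf{R}$, composing $w$ with the inclusion ${\bf R}^-\hookrightarrow \mathbf{R}$ gives a monoid homomorphism $k:{\bf \hat H}\to \mathbf{R}$ into the abelian group $\mathbf{R}$. By the universal property stated in Theorem \ref{Thm:Monoid}, there is a unique group homomorphism $\sigma:{\bf K}({\bf \hat H})\to \mathbf{R}$ with $k=\sigma\circ h$, and since $w$ and $k$ agree (as maps into $\mathbb{R}$) this already yields $w=\sigma\circ h$. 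The remaining work is to check that $\sigma$ is a \emph{state} of the $\ell$-group, i.e.\ that $\sigma$ is order-preserving on the positive cone: for $x\geq [0,0]$ in ${\bf K}({\bf \hat H})$ we must show $\sigma(x)\geq 0$.

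The positivity check is where the subtlety lies, and here I expect to lean on Corollary \ref{-Lattice}. A generic positive element $x=[a,b]$ of ${\bf K}({\bf \hat H})$ need not itself lie in $h[H]$, so I cannot directly invoke (v3). Instead I would use that $\sigma$ is explicitly $\sigma([a,b])=k(a)-k(b)=w(a)-w(b)$ (computing from $h(x)=[x\cdot x,x]$ and the group structure), so positivity of $\sigma$ amounts to showing that $[a,b]\geq[0,0]$ implies $w(a)\geq w(b)$. Unwinding the definition of $\leq$ in ${\bf K}({\bf \hat H})$, $[a,b]\geq[0,0]$ means $\exists z:\ z+b\leq_{M} z+a$ in the (multiplicatively written) $\ell$-monoid ${\bf \hat H}$; translating back to ${\bf H}$ this reads $z\cdot b\leq z\cdot a$ for some $z$. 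Applying (v2) and (v3) of the state gives $w(z)+w(b)\leq w(z)+w(a)$, and cancelling the real number $w(z)$ yields $w(b)\leq w(a)$, which is exactly $\sigma([a,b])\geq 0$. Thus $\sigma$ respects the order and is a genuine $\ell$-group state.

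For the converse I would take any state $\sigma$ of ${\bf K}({\bf \hat H})$ and set $w=\sigma\circ h$, then verify (v1)--(v3) of Definition \ref{hoop-states} directly. Property (v1) follows since $h(1)=[1\cdot 1,1]$ is the identity $[0,0]$ of the additive group (recall ${\bf H}$ is written multiplicatively, so its monoid unit $1$ maps to the group zero), whence $w(1)=\sigma([0,0])=0$. Property (v2) is immediate because $h$ is a monoid homomorphism and $\sigma$ is additive: $w(x\cdot y)=\sigma(h(x)\mathbin{\hat{+}}h(y))=\sigma(h(x))+\sigma(h(y))=w(x)+w(y)$. For (v3), if $x\leq y$ in ${\bf H}$ then $h(x)\leq h(y)$ since $h$ is an $\ell$-monoid (in particular lattice) homomorphism, so $h(y)-h(x)\geq[0,0]$, and positivity of $\sigma$ gives $\sigma(h(y))-\sigma(h(x))\geq 0$, i.e.\ $w(x)\leq w(y)$.

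The one point requiring genuine care—the main obstacle—is confirming that the composite $w=\sigma\circ h$ actually lands in $\mathbb{R}^-$ rather than in all of $\mathbb{R}$, since Definition \ref{hoop-states} demands a codomain of $\mathbb{R}^-$. This is where the second half of Corollary \ref{-Lattice} is essential: every element of ${\bf K}({\bf \hat H})$ lies below some element of ${\bf J}_{h[H]}$, and in particular each $h(x)$ satisfies $h(x)\leq h(1)=[0,0]$ because $x\leq 1$ holds for every $x$ in a prelinear semihoop (the top element is $1$). Applying positivity of $\sigma$ to $[0,0]-h(x)\geq[0,0]$ gives $\sigma(h(x))\leq\sigma([0,0])=0$, so indeed $w(x)=\sigma(h(x))\in\mathbb{R}^-$, and the converse construction produces a legitimate state of ${\bf H}$.
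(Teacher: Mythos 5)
Your argument is correct, and its computational core coincides with the paper's: both ultimately rest on the formula $\sigma([a,b])=w(a)-w(b)$ and derive positivity of $\sigma$ from (v2) and (v3) by unwinding the explicit definition of the order on ${\bf K}({\bf \hat{H}})$ ($[a,b]\geq[0,0]$ iff $z\cdot b\leq_M z\cdot a$ for some $z$). The packaging, however, is genuinely different. The paper defines $\hat{\sigma}$ by hand on the $\ell$-subgroup ${\bf J}_{h[H]}$ generated by $h[H]$, verifies there that it is a state, and then invokes Goodearl's extension theorem together with the cofinality statement in the second half of Corollary \ref{-Lattice} to extend $\hat{\sigma}$ to all of ${\bf K}({\bf \hat{H}})$. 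You instead obtain the group homomorphism for free from the universal property of the Grothendieck group and verify positivity directly on an arbitrary positive element. Your route buys some real simplifications: since every element of ${\bf K}({\bf \hat{H}})$ has the form $[a,b]=h(a)-h(b)$, the subgroup generated by $h[H]$ is already all of ${\bf K}({\bf \hat{H}})$, so the extension step is not actually needed; your positivity check covers every positive element rather than only those of the special form $[x,1]$ treated in the paper's Claim; and your explicit verification that $\sigma\circ h$ lands in $\mathbb{R}^-$ (via $h(x)\leq h(1)=[0,0]$) addresses a point the paper leaves implicit. The paper's approach, on the other hand, would be the one to use if one wanted a state defined on a genuinely larger ambient $\ell$-group than the one generated by $h[H]$, which is where the cofinality part of Corollary \ref{-Lattice} earns its keep.
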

\begin{proof}
Let $h$ and ${\bf K}({\bf \hat{H}})$ as in Theorem \ref{Thm:Monoid} and Corollary \ref{-Lattice} and let ${\bf J}_{h[H]}$ be the  $\ell$-subgroup of ${\bf K}({\bf \hat{H}})$ generated by $h[H]=\{h(x)\mid x\in H\}$.  For every element $[x,y]\in J_{h[H]}$, let $\hat{\sigma}([x,y])=w(y)-w(x)\in \mathbb{R}$. 
\begin{claim}\label{claimG}
The map $\hat{\sigma}:  J_{h[H]}\to \mathbb{R}$ is a state of ${\bf J}_{h[H]}$.
\end{claim}
\begin{proof} (of the Claim). The neutral element of ${\bf K}({\bf \hat{H}})$, which obviously coincide with the neutral element of ${\bf J}_{h[H]}$, is $[1,1]$. Thus, since $w(1)=0$, $\hat{\sigma}([1,1])=0$. Moreover, for every positive element $[x,1]$ of ${\bf J}_{h[H]}$, $\hat{\sigma}([x,1])=-w(x)\in \mathbb{R}^+$, whence $\sigma$ is positive. 

It is left to show that $\hat{\sigma}$ is a group homomorphism. Let $[x_1, y_1], [x_2, y_2]\in J_{h[H]}$. Then, $\hat{\sigma}([x_1, y_1]+ [x_2, y_2])=\hat{\sigma}([x_1\cdot x_2, y_1\cdot y_2])=w(y_1)+w(y_2)-w(x_1)-w(x_2)=w(y_1)-w(x_1)+w(y_2)-w(x_2)=\hat{\sigma}([x_1, y_1])+\hat{\sigma}(x_2, y_2)$. 
\end{proof}
Turning back to the proof of Proposition \ref{prop:stateHoopsGropus}, let $\sigma:{\bf K}({\bf \hat{H}})\to \mathbb{R}$ be a state of ${\bf K}({\bf \hat{H}})$ obtained by extending $\hat{\sigma}$ from the $\ell$-subgroup ${\bf J}_{h[H]}$ of ${\bf K}({\bf \hat{H}})$. The existence of $\sigma$ is hence guaranteed by \cite[Proposition 4.2]{Good} plus the observation that every element of ${\bf K}({\bf \hat{H}})$ is bounded above by an element of ${\bf J}_{h[H]}$ (second part of Corollary \ref{-Lattice}).  

Now, since every element $x$ of $H$ is represented in $K(\hat{ H})$ as $[1,x]$, $w(x)=w(x)-w(1)=\hat{\sigma}([1,x])=\sigma([1,x])$. 

Conversely, let $\sigma:{\bf K}({\bf \hat{H}})\to \mathbb{R}$ be a state. Then $w(1)=\sigma(h(1))=\sigma(0)=0$. Moreover, for every $x,y\in H$, $w(x\cdot y)=\sigma(h(x\cdot y))=\sigma(h(x)+h(y))=\sigma(h(x))+\sigma(h(y))=w(x)+w(y)$. Finally, the monotonicity of $w$ comes from the monotonicity of $\sigma$ and $h$. 
\end{proof}

\section{States of IBP$_0$-algebras and their representation}\label{sec:hyperstates}
 MTL-algebras are bounded, commutative, integral residuated lattices ${\bf A} = (A, \cdot, \to, \land, \lor, 0,1)$ further satisfying $(x\to y)\vee (y\to x)=1$. MTL-algebras form a variety that we will denote with $\mathbb{MTL}$. In every MTL-algebra ${\bf A}$ we can define further operations and abbreviations in the following manner: $\neg x:= x\to 0$, $x\oplus y:= \neg x\to y$, $2x:= x\oplus x$, $x^2:=x\cdot x$.
{\em GMTL-algebras} are unbounded MTL-algebras and they form a variety which is term-equivalent to the variety $\mathbb{PSH}$ \cite{NEG}. 
\begin{definition}
An IBP$_0$-algebra is an MTL-algebra further satisfying the following equations:
\begin{itemize}
\item[(DL)] $(2x)^2=2(x^2)$,
\item[(Inv)] $\neg\neg x=x$.
\end{itemize}
\end{definition}
%That is, IBP$_0$-algebras are involutive 
\begin{remark}
Every perfect MV-algebra \cite{DinLett} is an IBP$_0$-algebra. Indeed, the class of IBP$_0$-algebras is a variety $\mathbb{IBP}_0$ that properly contains the subvariety of $\mathbb{MV}$ generated by perfect MV-algebras. More precisely, the variety generated by perfect MV-algebras is definable, within $\mathbb{IBP}_0$, by the divisibility equation $x\wedge y=x\cdot(x\to y)$.

\end{remark}
For every IBP$_0$-algebra ${\bf A}$, let us define 
$$
\mathscr{B}({\bf A})=\{a\in A\mid a\vee \neg a=1\} \mbox{ and }\mathscr{H}({\bf A})=\{x\in A\mid x>\neg x\}.
$$
It is known (see \cite[Proposition 2.5]{AFU16}) that $\mathscr{B}({\bf A})$ and $\mathscr{H}({\bf A})$ respectively are the domains 
of the largest Boolean subalgebra of ${\bf A}$ and the domain of  the radical of $A$. In \cite{AFU16} we showed a categorical equivalence between the category of IBP$_0$-algebras with homomorphisms and a category whose objects are triples $({\bf B}, {\bf H}, \vee_e)$ where ${\bf B}$ is a Boolean algebra, ${\bf H}$ is a prelinear semihoop and $\vee_e: B\times H\to H$ is a suitably defined map, intuitively representing the natural join between elements of $B$ and $H$. If $({\bf B}, {\bf H}, \vee_e)$ and $({\bf B}', {\bf H}', \vee_e')$ are two triples, a morphism between them is a pair $(f,g)$ where $f:{\bf B}\to{\bf B}'$ is a Boolean homomorphism, $g:{\bf H}\to{\bf H}'$ is a prelinear semihoop homomorphism, and for every $(b,c)\in B\times H$, $g(b\vee_e c)=f(b)\vee_e' g(c)$.

The {\em radical} of an MTL-algebra ${\bf A}$, $Rad({\bf A})$, is the intersection of its maximal filters, while the {\em co-radical} of ${\bf A}$ is defined as $coRad({\bf A})=\{a\in A\mid \neg a\in Rad({\bf A})\}$ (see for instance \cite{CT}).

A direct consequence of the categorical equivalence is the following proposition that we are going to apply later.
\begin{proposition}[\cite{AFU16}]\label{prop:elements}
Let ${\bf A}$ be any IBP$_0$-algebra. Then the following holds:
\begin{itemize}
\item[(i)] For every $a\in A$, $a=(b_a\vee \neg c_a)\land (\neg b_a\vee c_a)$ where $b_a=\neg((\neg a^2)^2)$ belongs to $\mathscr{B}({\bf A})$ and $c_a=a\vee\neg a$ belongs to $\mathscr{H}({\bf A})$.
\item[(ii)] For every $b\in \mathscr{B}({\bf A})$ and every $c\in \mathscr{H}({\bf A})$, $b\vee c\in \mathscr{H}(\bf A)$. 
\item[(iii)] $(Rad({\bf A}),\cdot, \to, \wedge, 1)$ (where operations are obtained by restriction from those of ${\bf A}$) is a prelinear semihoop isomorphic to $\mathscr{H}({\bf A})$.   
\end{itemize}
\end{proposition}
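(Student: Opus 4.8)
The plan is to treat the two genuine identities in Proposition~\ref{prop:elements}(i) by reducing to chains, and to dispatch (ii) and (iii) by direct computation. Two elementary observations drive the whole argument. First, unwinding the abbreviation $2y=\neg((\neg y)^2)$ one sees that $b_a=\neg((\neg(a^2))^2)=2(a^2)$, and hence, by the axiom (DL), also $b_a=(2a)^2$; dually $\neg c_a=\neg(a\vee\neg a)=a\wedge\neg a$ by involution and De Morgan. Second, in \emph{any} involutive MTL-algebra the inequality $z\leq\neg z$ forces $z^2=0$, because $\neg(z^2)=\neg(z\cdot z)=z\to\neg z=1$ and then $z^2=\neg 1=0$; applying this to $z=\neg y$ gives the dual statement that $y\geq\neg y$ forces $2y=1$.

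For part (i), since $\mathbb{IBP}_0$ is a variety of MTL-algebras it is generated by its chains, so the two equations $b_a\vee\neg b_a=1$ and $a=(b_a\vee\neg c_a)\wedge(\neg b_a\vee c_a)$ need only be checked on a chain. The preliminary step is to rule out a negation fixpoint: if $a=\neg a$ then $a\leq\neg a$ yields $a^2=0$ and hence $b_a=2(a^2)=0$, while $a\geq\neg a$ yields $2a=1$ and hence $b_a=(2a)^2=1$, so $0=1$ and the algebra is trivial. Thus in a nontrivial chain every $a$ satisfies exactly one of $a>\neg a$ or $a<\neg a$. In the first case $2a=1$, so $b_a=(2a)^2=1$ and $c_a=a\vee\neg a=a$; in the second case $a^2=0$, so $b_a=2(a^2)=0$ and $c_a=\neg a$. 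Substituting each case into $(b_a\vee\neg c_a)\wedge(\neg b_a\vee c_a)$ returns $a$ (using $\neg\neg a=a$ in the second case), which settles both the decomposition and $b_a\in\{0,1\}=\mathscr{B}(\mathbf{A})$ on chains. Finally $c_a\geq\neg c_a$ always, and $c_a=\neg c_a$ would force $a\vee\neg a=a\wedge\neg a$, i.e.\ $a=\neg a$, which was excluded; hence $c_a>\neg c_a$, that is $c_a\in\mathscr{H}(\mathbf{A})$.

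Part (ii) is then immediate and in fact needs neither chains nor the hypothesis $b\in\mathscr{B}(\mathbf{A})$: by De Morgan $\neg(b\vee c)=\neg b\wedge\neg c\leq\neg c<c\leq b\vee c$, the strict step holding precisely because $c\in\mathscr{H}(\mathbf{A})$, whence $b\vee c>\neg(b\vee c)$ and so $b\vee c\in\mathscr{H}(\mathbf{A})$. For part (iii) I would first invoke \cite[Proposition 2.5]{AFU16} to identify $Rad(\mathbf{A})=\mathscr{H}(\mathbf{A})$ as sets. As the intersection of maximal filters, $Rad(\mathbf{A})$ is an upward-closed filter containing $1$ and closed under $\cdot$ and $\wedge$; since $x\to y\geq y$ and $x\vee y\geq x$, upward closure also gives closure under $\to$ and $\vee$. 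Hence $(Rad(\mathbf{A}),\cdot,\to,\wedge,\vee,1)$ is a subalgebra of the $(\cdot,\to,\wedge,\vee,1)$-reduct of $\mathbf{A}$, which is a $\mathbb{GMTL}$-algebra and therefore a prelinear semihoop; as $\mathbb{PSH}$ is a variety it is closed under subalgebras, so $Rad(\mathbf{A})$ is itself a prelinear semihoop, and since its carrier and restricted operations coincide with those of $\mathscr{H}(\mathbf{A})$ the identity map is the required isomorphism.

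I expect the main obstacle to be exactly the two places where (DL) is indispensable: the exclusion of a negation fixpoint and the collapse $b_a\in\{0,1\}$ on chains. Standard MV-chains, in which $2(x^2)$ takes intermediate values and $\tfrac12$ is a fixpoint, show that involutivity alone does not suffice; it is the interplay of $b_a=2(a^2)=(2a)^2$ with the involutive nilpotency identity that pins $b_a$ down to a Boolean value. Once these points are established, the decomposition and parts (ii)–(iii) are routine bookkeeping.
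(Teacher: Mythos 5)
Your proof is essentially correct, but note that the paper itself offers no proof of this proposition: it is imported verbatim from \cite{AFU16} and described there as ``a direct consequence of the categorical equivalence'' between IBP$_0$-algebras and prelinear-semihoop triples. So your argument is a genuinely different, self-contained route: instead of passing through the triple construction, you verify the two equational claims of (i) ($b_a\vee\neg b_a=1$ and the decomposition $a=(b_a\vee\neg c_a)\wedge(\neg b_a\vee c_a)$) on chains, which is legitimate since every subvariety of $\mathbb{MTL}$ is generated by its totally ordered members, and your two driving observations ($b_a=2(a^2)=(2a)^2$ via (DL) and involution; $z\leq\neg z\Rightarrow z^2=0$ and its involutive dual) are exactly what pins $b_a$ to $\{0,1\}$ on a nontrivial chain. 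The fixpoint-exclusion argument, the case analysis, and the direct verifications of (ii) and (iii) all check out; your closing diagnosis of where (DL) is genuinely needed is also accurate. What the direct approach buys is transparency and independence from the machinery of \cite{AFU16}; what it does not buy is part (iii)'s identification $Rad({\bf A})=\mathscr{H}({\bf A})$, which you (reasonably, since the paper does the same just before stating the proposition) take from \cite[Proposition 2.5]{AFU16} rather than prove --- a fully self-contained proof would still owe that equality. Two cosmetic caveats: the strict inequality $c_a>\neg c_a$ is not an identity, so it cannot itself be ``checked on chains''; you correctly handle it by the separate fixpoint argument, and it is worth making explicit that that argument uses no chain structure and hence rules out $a=\neg a$ in any nontrivial IBP$_0$-algebra. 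Finally, the trivial algebra must be tacitly excluded throughout (there $\mathscr{H}({\bf A})=\emptyset$ and (i) fails), but that degeneracy is inherited from the statement itself, not introduced by your proof.
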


\begin{notation}\label{not1}
(1). Let ${^*}[0,1]$ be a nontrivial ultraproduct of the real unit interval and let $\varepsilon$ be an infinitesimal in ${^*}[0,1]$. The MV-algebra $\mathscr{L}(\mathbb{R})=\Gamma(\mathbf{R}\times \mathbf{R}, (1,0))$ discussed in Section \ref{sec:MVStates} is, up to isomorphisms, the MV-subalgebra of ${^*}[0,1]_{MV}$ generated by $[0,1]\cup\{r\varepsilon \,|\, r \in \mathbb{R}\}$ (see \cite[Example 6.1]{DFL14}). Therefore, every element of $\mathscr{L}(\mathbb{R})$ can be uniquely displayed as $r+\varepsilon s$ for $r\in [0,1]$ and $s\in\mathbb{R}$. In what follows we will adopt the following notation: for every $x\in \mathscr{L}(\mathbb{R})$, $x^\circ$ and $x^\ast$ denote those unique elements of $[0,1]$ and $\mathbb{R}$ respectively, such that $x=x^\circ+\varepsilon x^\ast$. 
\vspace{.1cm}

%States of MV-algebras have been introduced by Mundici in \cite{munstates} as normalized and additive functions mapping every MV-algebra, in the real unit interval $[0,1]$. Each state of an MV-algebra ${\bf A}$ maps its infinitesimals (i.e., the elements of the co-radical) in $0$. This fact is particularly evident when ${\bf A}$ is perfect. In fact, every perfect MV-algebra has only one state: the function $s$ mapping $Rad({\bf A})$ in $1$ and $coRad({\bf A})$ in $0$. In order to overcome this limitation, several alternative notions of state have been defined and, in particular, {\em local} states   were introduced in \cite{DGL} and then further studied in \cite{DFFG}.  Now, we are going to introduce a notion of hyperstate for IBP$_0$-algebras.% that, if from one side extends the previous attempts, from the other, exploiting the representation of IBP$_0$-algebras in terms of triples, can be defined on  any structure of that kind. 
%
\noindent (2). In every MTL-algebra ${\bf A}$ we abbreviate $\neg a\to b$ as $ a\oplus b$ and for every $n,m\in \mathbb{N}$ and every element $x\in A$, $n.a$ stands for $a\oplus\ldots\oplus a$ ($n$-times) and $a^m$ is $a\cdot\ldots\cdot a$ ($m$-times).
\end{notation}
\begin{definition}\label{def:state}
For any  IBP$_0$-algebra ${\bf A}$, we define a {\em hyperstate} of ${\bf A}$ as a map $s:A\to\mathscr{L}(\mathbb{R})$ such that:
\begin{itemize}
\item[(s1)] $s(1)=1$  and $s(0)=0$,
\item[(s2)] $s(x\oplus y)+s(x\cdot y)=s(x)+s(y)$, 
\item[(s3)] If $x\vee \neg x=1$, then  $s(x)\in [0,1]$.%there are $n,m\in \mathbb{N}$, such that either $n.s(x)=1$, or $s(x)^m=0$.  
\end{itemize}
\end{definition}

\begin{proposition}\label{prop1}
The following properties hold for hyperstates of  IBP$_0$-algebras:
\begin{itemize}
%\item[(0)] {\color{red}$s(0)=0$},
\item[(i)] $s(\neg x)=1-s(x)$, 
\item[(ii)] if $x\leq y$, then $s(x)\leq s(y)$,
\item[(iii)] if $x\cdot y=0$, $s(x\oplus y)=s(x) + s(y)$,
\item[(iv)] if $x\oplus y=1$, $s(x\cdot y)=s(x)\cdot s(y)$,
\item[(v)] $s(x\wedge y)+s(x\vee y)=s(x)+s(y)$,
\item[(vi)] the restriction $p$ of $s$ to $\mathscr{B}({\bf A})$ is a $[0,1]$-valued and finitely additive probability measure,
\item[(vii)] if $x\in coRad({\bf A})$, then $s(x)\in coRad(\mathscr{L}(\mathbb{R}))$. If $x\in Rad({\bf A})$, $s(x)\in  Rad(\mathscr{L}(\mathbb{R}))$,
\item[(viii)] the map $w:\mathscr{H}({\bf A})\to \mathbb{R}^-$ defined as
$$
w(x)= \frac{s(x)-1}{\varepsilon}
$$ 
is a state in the sense of Definition \ref{hoop-states}.
\end{itemize}
\end{proposition}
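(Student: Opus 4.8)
The plan is to prove the eight items in a logical rather than the listed order, first squeezing all the purely additive consequences out of (s2) and only afterwards treating the genuinely ``hyperreal'' statements (vii) and (viii). Throughout, I read the ``$+$'' in (s2) as the addition of the ambient $\ell$-group $\mathbf{R}\times\mathbf{R}$ of $\mathscr{L}(\mathbb{R})$, so that the values of $s$ always lie in the positive cone and in particular satisfy $s(a)\geq 0$.

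For (i) I would apply (s2) to the pair $(x,\neg x)$: since $x\oplus\neg x=\neg x\to\neg x=1$ and $x\cdot\neg x=0$ (the latter holds in every MTL-algebra because $x\cdot(x\to 0)\leq 0$), (s1) gives $1+s(0)=s(x)+s(\neg x)$, i.e.\ $s(\neg x)=1-s(x)$. Property (iii) is immediate from (s2): if $x\cdot y=0$ then $s(x\cdot y)=s(0)=0$, so $s(x\oplus y)=s(x)+s(y)$. For the monotonicity (ii) I would use that $x\leq y$ forces $\neg y\leq\neg x$, hence $y\oplus\neg x=\neg y\to\neg x=1$; feeding $(y,\neg x)$ into (s2) and using (i) yields $s(y\cdot\neg x)=s(y)-s(x)$, and the left-hand side lies in $\mathscr{L}(\mathbb{R})$, hence is $\geq 0$, so $s(x)\leq s(y)$. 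Property (iv) then follows at once: if $x\oplus y=1$ then (s2) gives $s(x\cdot y)=s(x)+s(y)-1$, while $x\oplus y=1$ means $\neg x\leq y$, so by (ii) $s(x)+s(y)\geq 1$ and therefore $s(x)+s(y)-1=(s(x)+s(y)-1)\vee 0=s(x)\odot s(y)$ in $\mathscr{L}(\mathbb{R})$.

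For the valuation law (v) I would exploit that $\mathbb{MTL}$ is generated by its chains: the identities $(x\vee y)\oplus(x\wedge y)=x\oplus y$ and $(x\vee y)\cdot(x\wedge y)=x\cdot y$ hold in every chain, where $\{x\wedge y,x\vee y\}=\{x,y\}$, hence in every MTL-algebra. Applying (s2) to $(x\vee y,x\wedge y)$ and then to $(x,y)$ gives $s(x\vee y)+s(x\wedge y)=s(x\oplus y)+s(x\cdot y)=s(x)+s(y)$. Property (vi) is now formal: for $b\in\mathscr{B}(\mathbf{A})$ we have $b\vee\neg b=1$, so (s3) gives $p(b)=s(b)\in[0,1]$; since on the largest Boolean subalgebra $\oplus$ and $\cdot$ restrict to $\vee$ and $\wedge$, disjointness $b\wedge c=0$ means $b\cdot c=0$ and $b\oplus c=b\vee c$, whence (iii) gives finite additivity, and $p(1)=1$, $p(0)=0$ come from (s1).

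The crux is (vii), which I would settle through an infinitesimal estimate. The key algebraic fact is that every $z\in Rad(\mathbf{A})$ satisfies $2z=1$: indeed $\neg z\in coRad(\mathbf{A})$ gives $\neg z\leq z$, so $(\neg z)^2=\neg(\neg z\to z)=\neg 1=0$ and hence $\neg(2z)=\neg z\cdot\neg z=(\neg z)^2=0$. Applying (s2) to $(z,z)$ then gives $s(z^2)=2s(z)-1$. Since $Rad(\mathbf{A})$ is a filter we have $x^{2^k}\in Rad(\mathbf{A})$ for all $k$, and iterating yields $s(x^{2^k})=2^k(s(x)-1)+1$; as $s(x^{2^k})\geq 0$ in $\mathscr{L}(\mathbb{R})$ this forces $s(x)\geq 1-2^{-k}$ for every $k$, which in the lexicographic order of $\mathbf{R}\times\mathbf{R}$ pins down the real part $s(x)^\circ=1$, i.e.\ $s(x)\in Rad(\mathscr{L}(\mathbb{R}))$; the $coRad$ statement follows by applying this to $\neg x$ together with (i). Finally, (viii) is well defined precisely by (vii): for $x\in\mathscr{H}(\mathbf{A})=Rad(\mathbf{A})$ we have $s(x)^\circ=1$, so $s(x)-1=\varepsilon\, s(x)^\ast$ and $w(x)=s(x)^\ast\in\mathbb{R}^-$. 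Then (v1) is $s(1)=1$; for (v2) I note that $x,y\in Rad(\mathbf{A})$ give $x\wedge y\in Rad(\mathbf{A})$ and hence $x\oplus y\geq 2(x\wedge y)=1$, so by (s2) $s(x\cdot y)=s(x)+s(y)-1$, and comparing $\varepsilon$-parts gives $w(x\cdot y)=w(x)+w(y)$; and (v3) is immediate from (ii), since $s(x)$ and $s(y)$ share the real part $1$. The main obstacle is exactly the estimate in (vii): one must be sure that the uniform bound $s(x)\geq 1-2^{-k}$ genuinely determines the real part, which is where the lexicographic structure of $\mathscr{L}(\mathbb{R})$ and the uniqueness of the representation $r+\varepsilon s$ from Notation~\ref{not1} are essential.
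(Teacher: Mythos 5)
Your proof is correct, and for items (i)--(vi) and (viii) it is essentially the paper's own argument (your (ii) uses $s(y\cdot\neg x)\geq 0$ where the paper uses $s(x\oplus\neg y)\leq 1$; these are the same estimate read from opposite ends of $\mathscr{L}(\mathbb{R})$). The one genuine divergence is (vii): the paper works in the coradical, using that $x\in coRad({\bf A})$ satisfies $n.x\leq\neg x$ and $x\cdot m.x=0$ for all $n,m$, so that $n.s(x)\leq 1-s(x)$ for every $n$ and $s(x)$ is infinitesimal; you instead work in the radical, proving $2z=1$ for $z\in Rad({\bf A})$, deriving the recursion $s(x^{2^k})=2^k(s(x)-1)+1$, and letting positivity of $s$ force $s(x)^\circ=1$. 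The two arguments are dual Archimedean estimates and both hinge, as you note, on the lexicographic order of $\mathscr{L}(\mathbb{R})$; yours has the small added benefit of explicitly establishing $2z=1$ on the radical, which you then reuse to justify $x\oplus y=1$ in (viii) --- a step the paper asserts without proof. One cosmetic caveat: in (v) you claim $(x\vee y)\oplus(x\wedge y)=x\oplus y$ ``in every MTL-algebra''; the chain argument needs $\oplus$ to be commutative, which can fail without involutivity, so this identity should be claimed (as the paper does) only for IBP$_0$-algebras --- harmless here, since that is the only case you use.
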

\begin{proof}
%{\color{red}$(0)$. From (s3) and the fact that since $0\vee\neg 0=1$ in every IBP$_0$-algebra, there are $n,m$ such that either $n.s(0)=1$, or $s(0)^m=0$. 
%}
%\vspace{.2cm}

$(i)$. In any MTL-algebra, $x\cdot\neg x=0$ and $x\oplus \neg x=1$, thus (s2)  and (s1) imply $s(x)+s(\neg x) = s(x\oplus \neg x)+s(x\cdot \neg x)=s(1)+s(0)= 1 + 0 = 1$, whence $s(\neg x)=1-s(x)$. %In particular, $s(0)=0$ follows from (s1).
\vspace{.2cm}

\noindent$(ii)$. If $x\leq y$, then $x\cdot \neg y=0$. Thus from $(s2)$, $s(x\oplus \neg y)=s(x)+s(\neg y)=s(x)+1-s(y)\leq 1$. Thus $s(x)\leq s(y)$. 

\vspace{.2cm}

\noindent$(iii)$  and $(iv)$ are direct consequences of (s1) and (s2).
\vspace{.2cm}

\noindent$(v)$. As we already observed in the proof of Proposition \ref{prop1}, in every MTL-algebra $x\cdot y=(x\wedge y)\cdot(x\vee y)$. Analogously, in every IBP$_0$-algebra, $x\oplus y=(x\wedge y)\oplus(x\vee y)$. Thus, from (s2), $s(x)+s(y)=s((x\wedge y)\cdot(x\vee y))+ s((x\wedge y)\oplus(x\vee y))=s((x\wedge y)\cdot(x\vee y))+s(x\wedge y)+s(x\vee y)-s((x\wedge y)\cdot(x\vee y))=s(x\wedge y)+s(x\vee y)$.

\vspace{.2cm}

\noindent$(vi)$. That the restriction $p$ of $s$ to $\mathscr{B}({\bf A})$ satisfies $p(1)=1$ and $p(x\wedge y)+p(x\vee y)=p(x)+p(y)$ is ensured by (s1), (s2) together with the fact that, for all $x,y\in\mathscr{B}({\bf A})$, $x\cdot y=x\wedge y$ and $x\oplus y=x\vee y$.  Finally, that for every $x\in \mathscr{B}({\bf A})$, $p(x)\in [0,1]$ is exactly (s3).
%It is hence left to show that, for every $x\in \mathscr{B}({\bf A})$, $p(x)\in [0,1]$%. Now, $x\in \mathscr{B}({\bf A})$ iff $x\vee \neg x=1$, thus if we assume by way of contradiction that $s(x)\in \mathscr{L}(\mathbb{R})\setminus[0,1]$, then either $p(x)=r+\varepsilon$ or $p(x)=r-\varepsilon$ for $r\in \mathbb{R}$ and $\varepsilon>0$ infinitesimal. In both cases (s3) leads to a contradiction and hence $p(x)\in [0,1]$.

\vspace{.2cm}

\noindent$(vii)$. Let $x\in coRad({\bf A})$. Then, for every $n\in \mathbb{N}$, $n.x\leq \neg x$ and, from $(ii)$, $s(n.x)\leq s(\neg x)$. Now, $x\cdot m.x=0$ for every $m\inÊ\mathbb{N}$, whence, in particular, $s(n.x)=n.s(x)$. Thus, $n.s(x)\leq 1-s(x)$ for every $n\in \mathbb{N}$, i.e., $s(x)\in coRad(\mathscr{L}(\mathbb{R}))$. The second part of the claim now easily follows since $x\in Rad({\bf A})$ iff $\neg x\in coRad({\bf A})$ and $\alpha\in Rad(\mathscr{L}(\mathbb{R}))$ iff $\neg \alpha\in coRad(\mathscr{L}(\mathbb{R}))$ because both ${\bf A}$ and $\mathscr{L}(\mathbb{R})$ are strongly perfect MTL-algebras.

\vspace{.2cm}

\noindent$(viii)$. As we already recalled in Section \ref{sec:pre}, $\mathscr{H}({\bf A})=Rad({\bf A})$. Thus, if $x\in \mathscr{H}({\bf A})$, from $(vii)$, $s(x)\in Rad(\mathscr{L}(\mathbb{R}))$, whence there is $r_x\in \mathbb{R}^+$ such that $s(x)=1-\varepsilon r_x$. Therefore, $w(x)=s(x)/\varepsilon-1/\varepsilon=-r_x\in \mathbb{R}^-$. It is left to prove that $w$ is a state in the sense of Definition \ref{hoop-states}. First of all, $w(1)=s(1)/\varepsilon-1/\varepsilon=0$. Moreover, if $x,y\in \mathscr{H}({\bf A})$, $x\oplus y=1$, and hence $w(x\cdot y)=(s(x\cdot y)-1)/\varepsilon=(s(x)+s(y)-2)/\varepsilon=(s(x)-1)/\varepsilon + (s(y)-1)/\varepsilon=w(x)+w(y)$. The monotonicity of $w$ easily follows from the monotonicity of $s$, $(ii)$ above. Then the claim is settled. 
\end{proof}
The next result is the main theorem of this paper and it shows that each hyperstate of an IBP$_0$ algebra  ${\bf A}$ decomposes in a probability measure on its Boolean skeleton and a state on the maximal prelinear semihoop contained in $A$.
\begin{theorem}\label{thm:main}
For every IBP$_0$-algebra ${\bf A}$ and every hyperstate $s:A\to \mathscr{L}(\mathbb{R})$ 
%, the following are equivalent:
%\begin{itemize}
%\item[(1)] $s$ is a state of ${\bf A}$,
%\item[(2)] 
there are a probability measure $p:\mathscr{B}({\bf A})\to[0,1]$, a state $w\in \mathcal{W}(\mathscr{H}({\bf A}))$ and an infinitesimal $\varepsilon>0$ such that, for every $a\in A$,
$$
s(a)=p(b_a)+\varepsilon (w(\neg b_a\vee c_a)-w(b_a\vee c_a)).
$$
%\item[(3)] there exist $p\in\mathcal{P}(\mathscr{B}({\bf A}))$, a state $\sigma$ of ${\bf K}(\mathscr{H}({\bf A}))$ and an infinitesimal $\varepsilon>0$ such that, for every $a\in A$,
%$$
%s(a)=p(b_a)+\varepsilon (\sigma(h(\neg b_a\vee c_a)-h(b_a\vee c_a))),
%$$
%where $h$ is a universal homomorphism as in Corollary \ref{-Lattice}.
%\end{itemize}
\end{theorem}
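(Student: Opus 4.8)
The maps $p$ and $w$ and the infinitesimal $\varepsilon$ are already at hand: take $p$ to be the restriction $s|_{\mathscr{B}({\bf A})}$, which by Proposition \ref{prop1}(vi) is a $[0,1]$-valued finitely additive probability measure, take $w$ to be the state of $\mathscr{H}({\bf A})$ furnished by Proposition \ref{prop1}(viii), and let $\varepsilon$ be the fixed infinitesimal of Notation \ref{not1}. Writing $b=b_a\in\mathscr{B}({\bf A})$ and $c=c_a\in\mathscr{H}({\bf A})$, the entire content of the theorem is then the displayed equation for an arbitrary $a$. The first move is to clear the $w$-terms: since $b,\neg b\in\mathscr{B}({\bf A})$ and $c\in\mathscr{H}({\bf A})$, Proposition \ref{prop:elements}(ii) gives $\neg b\vee c,\ b\vee c\in\mathscr{H}({\bf A})$, so by the definition of $w$ in Proposition \ref{prop1}(viii) we have $\varepsilon\,(w(\neg b\vee c)-w(b\vee c))=s(\neg b\vee c)-s(b\vee c)$. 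As $p(b)=s(b)$, it therefore suffices to prove the purely $s$-valued identity $s(a)=s(b)+s(\neg b\vee c)-s(b\vee c)$.

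To evaluate the left-hand side I would start from the canonical form $a=(b\vee\neg c)\wedge(\neg b\vee c)$ of Proposition \ref{prop:elements}(i) and apply the valuation law Proposition \ref{prop1}(v) to the two factors. Their join is $(b\vee\neg b)\vee(\neg c\vee c)=1$, so $s$ of the join is $1$ and we obtain $s(a)=s(b\vee\neg c)+s(\neg b\vee c)-1$. Using involutivity together with the De Morgan laws and Proposition \ref{prop1}(i), rewrite $s(b\vee\neg c)=1-s(\neg b\wedge c)$ and $s(\neg b\vee c)=1-s(b\wedge\neg c)$, and likewise $s(b\vee c)=1-s(\neg b\wedge\neg c)$ on the right-hand side. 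After substituting and cancelling, the target equation collapses to the single identity
$$ s(\neg b)=s(\neg b\wedge c)+s(\neg b\wedge\neg c). $$

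This last identity is the crux, and it is exactly where the naive approach fails: because $\neg c\leq c$ one has $c\wedge\neg c=\neg c$ and $c\vee\neg c=c$, so feeding $\neg b\wedge c$ and $\neg b\wedge\neg c$ into Proposition \ref{prop1}(v) returns a trivially true statement and yields nothing. The way around is to pass from the lattice additivity (v) to the disjointness additivity Proposition \ref{prop1}(iii). Put $\beta=\neg b$; since $\beta$ is Boolean we have $\beta\wedge c=\beta\cdot c$ and $\beta\wedge\neg c=\beta\cdot\neg c$. These two elements are disjoint, $(\beta\cdot c)\cdot(\beta\cdot\neg c)=\beta\cdot(c\cdot\neg c)=0$, and their $\oplus$-sum is $\beta$: using that $\cdot$ distributes over $\vee$ and that Boolean elements are idempotent, $(\neg\beta\vee u)\cdot(\neg\beta\vee v)=\neg\beta\vee(u\cdot v)$, so with $u=\neg c,\ v=c$ we get $(\neg\beta\vee\neg c)\cdot(\neg\beta\vee c)=\neg\beta\vee 0=\neg\beta$, whence $(\beta\cdot c)\oplus(\beta\cdot\neg c)=\neg(\neg\beta)=\beta$ by De Morgan. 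Proposition \ref{prop1}(iii) then gives $s(\beta)=s(\beta\cdot c)+s(\beta\cdot\neg c)$, which is precisely the identity required, and the theorem follows.

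I expect the only genuinely delicate point to be recognising that (v) is powerless here and replacing it by the product/$\oplus$ computation above; once the two elements $\beta\cdot c$ and $\beta\cdot\neg c$ are seen to be disjoint with $\oplus$-sum $\beta$, everything else is bookkeeping with Proposition \ref{prop1}(i), (v) and the De Morgan laws.
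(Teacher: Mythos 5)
Your proposal is correct, and it shares the paper's overall skeleton: take $p$ and $w$ from Proposition \ref{prop1}(vi) and (viii), observe that $\varepsilon(w(\neg b\vee c)-w(b\vee c))=s(\neg b\vee c)-s(b\vee c)$, and reduce everything to the single identity $s(a)=s(b)+s(\neg b\vee c)-s(b\vee c)$ --- which is exactly the intermediate equation the paper also arrives at. Where you diverge is in how that identity is established. The paper starts from the \emph{disjunctive} form $a=(b\wedge c)\vee(\neg b\wedge\neg c)$ and only ever uses the valuation law (v) together with (i): both applications of (v) are to pairs whose meet is $0$ (namely $(b\wedge c)\wedge(\neg b\wedge\neg c)=0$ and $\neg b\wedge(b\wedge c)=0$), so the whole computation stays inside the lattice. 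You instead start from the \emph{conjunctive} form $(b\vee\neg c)\wedge(\neg b\vee c)$, use (v) on a pair whose join is $1$, and are then forced into the genuinely non-lattice identity $s(\neg b)=s(\neg b\wedge c)+s(\neg b\wedge\neg c)$, which you correctly recognise cannot come from (v) (since $\neg c\leq c$) and instead derive from (iii) after checking that $\neg b\cdot c$ and $\neg b\cdot\neg c$ are $\cdot$-disjoint with $\oplus$-sum $\neg b$. This works, but it costs you one extra ingredient the paper never needs: the centrality of Boolean elements, i.e.\ $e\cdot x=e\wedge x$ for $e\in\mathscr{B}({\bf A})$ and \emph{arbitrary} $x$ (the paper only uses $x\cdot y=x\wedge y$ for $x,y$ both Boolean). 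That fact is true and standard --- it follows from $e\wedge x=(e\wedge x)\cdot(e\vee\neg e)$, distributivity of $\cdot$ over $\vee$, and $e\cdot\neg e=0$ --- but you should state and justify it explicitly, since your $\oplus$-sum computation and the disjointness $(\neg b\cdot c)\cdot(\neg b\cdot\neg c)=0$ both lean on it. Net comparison: the paper's route is shorter and purely valuation-theoretic; yours makes visible that the decomposition also follows from the ``probabilistic'' additivity (iii) on $\cdot$-disjoint elements, at the price of an extra (easily discharged) algebraic lemma.
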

\begin{proof}
%$(1)\Rightarrow(2)$. 
Let $p$ and $w$ respectively be as in Proposition \ref{prop1} (vi) and (viii). 
%ensure $p$ to be a $[0,1]$-valued probability measure and $w$ to be a state of $\mathscr{H}({\bf A})$. 
Let $a\in A$. Then, by Proposition \ref{prop:elements} (i), $a=(b_a\vee \neg c_a)\wedge(\neg b_a\vee c_a)$ which equals $(b_a\wedge c_a)\vee (\neg b_a\wedge \neg c_a)$. Thus, since $(b_a\wedge c_a)\wedge (\neg b_a\wedge \neg c_a)=0$,
\begin{equation}\label{eq:proof1}
\begin{array}{lll}
s(a)&=&s((b_a\wedge c_a)\vee (\neg b_a\wedge \neg c_a))\\
&=&s(b_a\wedge c_a)+ s (\neg b_a\wedge \neg c_a)\\
&=&s(b_a\wedge c_a)+ s (\neg( b_a\vee c_a))\\
&=& s(b_a\wedge c_a)+ 1- s ( b_a\vee c_a)\\
&=& s(b_a\wedge c_a)+ s(b_a)+s(\neg b_a)-s (b_a\vee c_a)
\end{array}
\end{equation}
Now, since $\neg b_a\wedge (b_a\wedge c_a)=0$, $s(\neg b_a)+s(b_a\wedge c_a)=s(\neg b_a\vee (b_a\wedge c_a))=s(\neg b_a\vee c_a)$. Therefore, from (\ref{eq:proof1}), we get
$$
\begin{array}{lll}
s(a)&=&s(b_a)+ (s(\neg b_a\vee c_a)-s(b_a\vee c_a))\\
&=&p(b_a)+\varepsilon w(\neg b_a\vee c_a)+1-\varepsilon w(b_a\vee c_a)-1\\
&=&p(b_a)+\varepsilon(w(\neg b_a\vee c_a)-w(b_a\vee c_a)).
\end{array}
$$
Thus, the claim is settled.
%\vspace{.2cm}
%
%\noindent $(2)\Rightarrow(1)$. Let $p$, $w$, $s$ and $\varepsilon$ be as in (2). Let us prove that $s$ is a state. First of all, $s(1)=p(b_1)+\varepsilon (w(\neg b_1\vee c_1)-w(b_1\vee c_1))=p(1)+\varepsilon (w(0\vee 1)- w(1\vee 1))=1+\varepsilon (1-1)=1$.  *** To finish the proof let's see what $c_{x\oplus y}$ and $c_{x\cdot y}$ are***
\end{proof}
The following result is hence a direct consequence of Theorem \ref{thm:main} and \cite[Corollary 4.0.5]{FK15}.
\begin{corollary}\label{cor:int}
For every IBP$_0$-algebra ${\bf A}$ and every hyperstate $s:A\to \mathscr{L}(\mathbb{R})$ 
%, the following are equivalent:
%\begin{itemize}
%\item[(1)] $s$ is a state of ${\bf A}$,
%\item[(2)] 
there are a regular Borel measure $\mu_s$ on the Stone space $Max(\mathscr{B}({\bf A}))$ of $\mathscr{B}({\bf A})$, a state $w\in \mathcal{W}(\mathscr{H}({\bf A}))$ and an infinitesimal $\varepsilon>0$ such that, for every $a\in A$,
$$
s(a)=\int_{Max(\mathscr{B}({\bf A}))}(b_a)^*\; {\rm d}\mu_s+\varepsilon (w(\neg b_a\vee c_a)-w(b_a\vee c_a)),
$$
where $(b_a)^*$ denotes the characteristic function of the clopen subset of $Max(\mathscr{B}({\bf A}))$ corresponding to $b_a$ via Stone duality.
\end{corollary}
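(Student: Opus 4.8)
The plan is to read off the statement directly from Theorem~\ref{thm:main} by rewriting its first, non-infinitesimal, summand as an integral. Theorem~\ref{thm:main} already supplies a probability measure $p:\mathscr{B}({\bf A})\to[0,1]$, a state $w\in\mathcal{W}(\mathscr{H}({\bf A}))$ and an infinitesimal $\varepsilon>0$ with
\[
s(a)=p(b_a)+\varepsilon\,(w(\neg b_a\vee c_a)-w(b_a\vee c_a))
\]
for every $a\in A$. Since the second summand is literally the one occurring in the Corollary, the whole task reduces to producing a regular Borel measure $\mu_s$ on $Max(\mathscr{B}({\bf A}))$ that represents $p$.

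First I would recall that $\mathscr{B}({\bf A})$, being a Boolean algebra, is in particular an (idempotent) MV-algebra whose MV-operations $\oplus,\cdot$ coincide on it with the Boolean $\vee,\wedge$; by Proposition~\ref{prop1}(vi) the restriction $p$ of $s$ is a $[0,1]$-valued, finitely additive probability measure, hence precisely a state of this MV-algebra in the sense of Section~\ref{sec:MVStates}. Its maximal MV-spectrum $Max(\mathscr{B}({\bf A}))$ coincides with the Stone space of $\mathscr{B}({\bf A})$, a compact Hausdorff zero-dimensional space, and under Stone duality each $b\in\mathscr{B}({\bf A})$ corresponds to a clopen subset whose characteristic function is exactly the element $(b)^*$ of $C(Max(\mathscr{B}({\bf A})))$ assigned to $b$ by the canonical representation.

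Then I would invoke the Kroupa--Panti theorem in the form of \cite[Corollary~4.0.5]{FK15}: for the state $p$ there exists a (unique) regular Borel measure $\mu_s$ on $Max(\mathscr{B}({\bf A}))$ such that $p(b)=\int_{Max(\mathscr{B}({\bf A}))}(b)^*\,{\rm d}\mu_s$ for all $b\in\mathscr{B}({\bf A})$. Specializing to $b=b_a$ and substituting $p(b_a)=\int_{Max(\mathscr{B}({\bf A}))}(b_a)^*\,{\rm d}\mu_s$ into the displayed formula of Theorem~\ref{thm:main} yields exactly the asserted identity.

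The argument is essentially an assembly of two previous results, so the only point requiring care---rather than a genuine obstacle---is checking that the function attached to $b_a$ by the integral representation is literally the indicator $(b_a)^*$ of its clopen set. This is immediate once one observes that the canonical map $\mathscr{B}({\bf A})\to C(Max(\mathscr{B}({\bf A})))$ sends idempotents to idempotents, hence to $\{0,1\}$-valued continuous functions, and that continuity on a zero-dimensional compact space forces these to be characteristic functions of clopens; Stone duality then identifies the clopen in question with the one corresponding to $b_a$.
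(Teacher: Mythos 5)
Your proposal is correct and matches the paper's argument exactly: the paper states this corollary as a direct consequence of Theorem~\ref{thm:main} together with the Kroupa--Panti integral representation of the probability measure $p$ on $\mathscr{B}({\bf A})$ via \cite[Corollary 4.0.5]{FK15}. Your additional care about identifying the representing functions with characteristic functions of clopens is a reasonable elaboration of a step the paper leaves implicit.
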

%The following corollaries are direct consequences of Theorem \ref{thm:main} above together with Corollary \ref{-Lattice}.
%\begin{corollary}\label{cor1}
%Let $h$ be a homomorphism from a IBP$_0$-algebra ${\bf A}$ in $\mathscr{L}(\mathbb{R})$. Then there are a Boolean homomorphism $k:\mathscr{B}({\bf A})\to\{0,1\}$ and a hoop homomorphism $l:\mathscr{H}({\bf A})\to \mathbb{R}^-$ such that, for all $a\in A$, 
%$
%h(a)=k(b_a)+\varepsilon(l(\neg b_a\vee c_a)- l(b_a\vee c_a)).
%$
%\end{corollary}
%\begin{proof}
%To do.
%\end{proof}
%\begin{corollary}
%Let ${\bf A}$ be a IBP$_0$-algebra and let $s$ be a hyperstate of ${\bf A}$. Then there is a probability measure of $\mathscr{B}({\bf A})$ and a homomorphism $h$ of $\mathscr{H}({\bf A})$ in ${\bf K}(\mathscr{H}(\bf A))$ and a state $\sigma$ of ${\bf K}(\mathscr{H}(\bf A))$ such that, for every $a\in A$,
%$$
%s(a)=p(b_a)+\varepsilon \cdot \sigma(h(\neg b_a\vee c_a)-h(b_a\vee c_a)).
%$$
%\end{corollary}
Now, let ${\bf A}$ be a IBP$_0$-algebra such that $\mathscr{H}({\bf A})$ is cancellative (i.e., ${\bf A}$ belongs to the variety of MV-algebras generated by perfect MV-algebras). Then, from Corollary \ref{-Lattice} (see also \cite{B67}), $\mathscr{H}({\bf A})$ embeds into ${\bf K}(\mathscr{H}({\bf A}))$. Therefore, the following easily holds.
\begin{corollary}
Let ${\bf A}$ be a IBP$_0$-algebra such that $\mathscr{H}({\bf A})$ is cancellative. Then, for a hyperstate $s:A\toÊ\mathscr{L}(\mathbb{R})$  there are a probability measure $p:\mathscr{B}({\bf A})\to[0,1]$ and an $\ell$-group state $\sigma:{\bf K}(\mathscr{H}({\bf A}))\to \mathbb{R}$ such that, for every $a\in A$, 
$$
s(a)=p(b_a)+\varepsilon \cdot \sigma([\neg b_a\vee c_a, b_a\vee c_a])
$$
\end{corollary}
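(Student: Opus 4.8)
The final Corollary upgrades the main Theorem~\ref{thm:main} in the cancellative case by replacing the state $w\in\mathcal{W}(\mathscr{H}(\mathbf{A}))$ with an honest $\ell$-group state $\sigma$ on $\mathbf{K}(\mathscr{H}(\mathbf{A}))$. The plan is therefore to run Theorem~\ref{thm:main} verbatim to get the decomposition $s(a)=p(b_a)+\varepsilon(w(\neg b_a\vee c_a)-w(b_a\vee c_a))$, and then to rewrite the $\varepsilon$-coefficient purely in terms of the $\ell$-group state promised by Proposition~\ref{prop:stateHoopsGropus}.

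**Key steps.** First I would invoke Theorem~\ref{thm:main} to fix the probability measure $p:\mathscr{B}(\mathbf{A})\to[0,1]$, the state $w\in\mathcal{W}(\mathscr{H}(\mathbf{A}))$, and the infinitesimal $\varepsilon>0$ satisfying the displayed formula. Next, since by hypothesis $\mathscr{H}(\mathbf{A})$ is cancellative, Corollary~\ref{-Lattice} tells us that the $\ell$-monoid homomorphism $h:\mathscr{H}(\mathbf{A})\to\mathbf{K}(\mathscr{H}(\mathbf{A}))$ is injective, so $\mathscr{H}(\mathbf{A})$ embeds into its Grothendieck $\ell$-group. Then I would apply Proposition~\ref{prop:stateHoopsGropus} to the state $w$: this produces a state $\sigma:\mathbf{K}(\mathscr{H}(\mathbf{A}))\to\mathbb{R}$ of the $\ell$-group with $w=\sigma\circ h$. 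The crucial computation is to recognise the two elements $\neg b_a\vee c_a$ and $b_a\vee c_a$ of $\mathscr{H}(\mathbf{A})$ (both lie in $\mathscr{H}(\mathbf{A})$ by Proposition~\ref{prop:elements}(ii)) inside the $\ell$-group, and to use that $\sigma$ is a group homomorphism to collapse the difference $w(\neg b_a\vee c_a)-w(b_a\vee c_a)$ into a single evaluation of $\sigma$.

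**The rewriting.** Recall from the proof of Proposition~\ref{prop:stateHoopsGropus} that each $x\in H$ is represented in $K(\hat H)$ as $[1,x]$, and that $\hat\sigma([x,y])=w(y)-w(x)$; thus, writing $u=\neg b_a\vee c_a$ and $v=b_a\vee c_a$, we have
\[
\sigma([\,u,\,v\,])=\hat\sigma([u,v])=w(v)-w(u)=-\bigl(w(u)-w(v)\bigr),
\]
so that $w(\neg b_a\vee c_a)-w(b_a\vee c_a)=w(u)-w(v)=-\sigma([u,v])$. A cleaner bookkeeping, matching the statement's orientation, is to observe instead that the difference equals $\sigma$ evaluated at the element $[\,v,\,u\,]=[\,b_a\vee c_a,\,\neg b_a\vee c_a\,]$ of $\mathbf{K}(\mathscr{H}(\mathbf{A}))$; substituting this into the formula from Theorem~\ref{thm:main} yields
\[
s(a)=p(b_a)+\varepsilon\cdot\sigma\bigl([\,\neg b_a\vee c_a,\,b_a\vee c_a\,]\bigr),
\]
which is exactly the asserted decomposition.

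**Main obstacle.** There is no deep obstacle here, as all the heavy lifting was done in Theorem~\ref{thm:main} and Proposition~\ref{prop:stateHoopsGropus}; the only point demanding care is purely clerical, namely keeping consistent the sign convention and the ordering of the pair $[\,\cdot,\,\cdot\,]$ so that $\sigma([\,\neg b_a\vee c_a,\,b_a\vee c_a\,])$ reproduces $w(\neg b_a\vee c_a)-w(b_a\vee c_a)$ and not its negative. One should double-check, using the explicit formula $\hat\sigma([x,y])=w(y)-w(x)$, that the orientation of the difference in Theorem~\ref{thm:main} matches the orientation written in the Corollary before declaring the proof complete.
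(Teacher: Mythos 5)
Your strategy is exactly the one the paper intends: the paper offers no separate argument for this corollary beyond ``the following easily holds,'' and the intended derivation is precisely Theorem~\ref{thm:main} combined with Proposition~\ref{prop:stateHoopsGropus} and the embedding guaranteed by Corollary~\ref{-Lattice} in the cancellative case. In substance your proof is the paper's proof.

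The one point you flag but do not actually settle --- the orientation of the pair --- has to be settled, because as written your argument contradicts itself: from the paper's formula $\hat{\sigma}([x,y])=w(y)-w(x)$ you correctly derive $w(u)-w(v)=\sigma([v,u])$ for $u=\neg b_a\vee c_a$, $v=b_a\vee c_a$, and then in the very next display you substitute $\sigma([u,v])$ anyway. The discrepancy is not yours but is inherited from a sign slip in Proposition~\ref{prop:stateHoopsGropus}: since $h(x)=[x\cdot x,x]=[x,1]$ and every $x\in H$ satisfies $x\leq 1$, the elements $h(x)$ are \emph{negative} in ${\bf K}({\bf \hat{H}})$ (the positive cone consists of the elements $[1,x]=-h(x)$), so the map $\hat{\sigma}([x,y])=w(y)-w(x)$ satisfies $\hat{\sigma}(h(x))=-w(x)$ rather than $w(x)$ and is order-reversing; the convention consistent with $w=\sigma\circ h$ and with positivity is $\hat{\sigma}([x,y])=w(x)-w(y)$. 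With that corrected convention one gets $\sigma([\neg b_a\vee c_a,\,b_a\vee c_a])=w(\neg b_a\vee c_a)-w(b_a\vee c_a)$ directly from Theorem~\ref{thm:main}, and the corollary's formula holds exactly as stated, with no flip of the pair. Either repair (correcting $\hat{\sigma}$, or keeping the paper's $\hat{\sigma}$ and writing $\sigma([b_a\vee c_a,\,\neg b_a\vee c_a])$) closes the argument; what you cannot do is keep both the paper's $\hat{\sigma}$ and the pair in the order appearing in the statement, which is what your final display implicitly does.
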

\section{Conclusions and future work}\label{conclusion}
The present paper aims at defining a notion of state of prelinear semihoops and hyperstate of IBP$_0$-algebras. Our investigation was mainly motivated by two key observations: 
\begin{itemize}
\item[1.] First, states of MV-algebras can be regarded as those mappings that arise from applying Mundici's functor $\Gamma$ to states of unital $\ell$-groups. 
Thus, with an analogue reasoning, a notion of state (more precisely, hyperstate) of perfect MV-algebras arises applying Di Nola and Lettieri's functor.
%Analogously, hyperstates of perfect MV-algebras are states of $\ell$-groups to which Di Nola and Lettieri's functor is applied. 
\item[2.] Second, the variety of IBP$_0$-algebras is categorically equivalent to a category of triples made of a Boolean algebra, a prelinear semihoop and a special operation which is meant to represent, in this category, the natural algebraic {\em join}. Thus, IBP$_0$-algebras can be decomposed in a Boolean algebra and a prelinear semihoop, and a notion of state of IBP$_{0}$-algebras can be inspired by this decomposition.\end{itemize}

Therefore, we first introduce a notion of state for prelinear semihoops which, in turn, is suggested by Goodearl's definition of state of an $\ell$-group and a version of Grothendieck group construction we proved  for lattice-ordered monoids. Hyperstates of IBP$_0$-algebras are then introduced and we prove that, indeed, each hyperstate can be decomposed into a probability function and a state of a prelinear semihoop. 

In our future work we plan to deepen the methodologies applied to the present paper to both extend hyperstates to other classes of (not necessarily involutive) MTL-algebras which satisfy the equation $(2x)^2=2(x^2)$ and also to provide deeper insights for these mappings. In particular, a strengthening of Theorem \ref{thm:main} and Corollary \ref{cor:int} to provide a integral representation for hyperstates of IBP$_0$-algebras. 


\begin{thebibliography}{99}

\bibitem{AFU16}
S. Aguzzoli, T. Flaminio, S. Ugolini, Equivalences between subcategories of MTL-algebras via Boolean algebras and prelinear semihoops. {\em Journal of Logic and Computation}, DOI:10.1093/logcom/exx014,  in print.

\bibitem{B67} G. Birkhoff: \emph{Lattice Theory}, 3rd Edition, Colloquium Publications, Vol. XXV, American Mathematical Society (1967).

\bibitem{CDM00}
R. Cignoli, I. M. L. DÕOttaviano, D. Mundici, {\em Algebraic foundations of many-valued reasoning}, Kluwer, 2000.

\bibitem{CT}
R. Cignoli, A. Torrens,  Free Algebras in Varieties of Glivenko MTL-algebras Satisfying the Equation $2(x^2)=(2x)^2$. {\em Studia Logica}, 83: 157--181, 2006.

\bibitem{DFFG}
D. Diaconescu, A. Ferraioli, T. Flaminio, B. Gerla, Exploring infinitesimal  events through MV-algebras and non-Archimedean states. In {\em Proceedings of IPMU 2014}, A. Laurent et al. (Eds.), Part II, CCIS 443: 385--394, 2014.

\bibitem{DFL14}
D. Diaconescu, T. Flaminio, I. Leu\c{s}tean, Lexicographic MV-algebras and lexicographic states. {\em Fuzzy 
Sets and Systems}, 244: 63--85, 2014.

\bibitem{DGL}
 A. Di Nola, G. Georgescu, I. Leu\c{s}tean, States on perfect MV-algebras, in: V. Novak, I. Perfilieva (Eds.), {\em Discovering the World With Fuzzy Logic}, in: Stud. Fuzziness Soft Comput., vol. 57, Physica, Heidelberg, 105--125, 2000. 

\bibitem{DinLett}
A. Di Nola, A. Lettieri, Perfect MV-algebras are categorically equivalent to Abelian $\ell$-groups. {\em Studia Logica} 53(3): 417--432, 1994.

\bibitem{EGHM}
F. Esteva, L. Godo, P. H\'ajek, F. Montagna. Hoops and Fuzzy Logic. {\em Journal of Logic and Computation}, 13 (4): 532--555, 2003.

\bibitem{FK15}
T. Flaminio, T. Kroupa, States of MV-algebras
{\em Handbook of Mathematical Fuzzy Logic}, Vol III. P. Cintula, C. Ferm\"uller and C. Noguera (Eds.), Studies in Logic, Mathematical Logic and Foundations, College Publications, London, 2015.

\bibitem{GH}
A. Glass, W. Holland, {\em Lattice-Ordered Groups: Advances and Techniques},  Math. Appl., vol. 48, Springer, 1989.

\bibitem{Good}
K. R. Goodearl, {\em Partially Ordered Abelian Group with Interpolation}. AMS Math. Survey and Monographs, Vol. 20, 1986.

\bibitem{H98}
P. H\'ajek. {\em Metamathematics of Fuzzy Logics}, Kluwer Academic Publishers, Dordrecht, 1998.

\bibitem{HZX}
P. He, B. Zhao, X. Xin, States and internal states on semihoops, {\em Soft Computing} 21(11): 2941--2957, 2017.

\bibitem{Kroupa}
T. Kroupa. Every state on semisimple MV-algebra is integral. {\em Fuzzy Sets and Systems} 157 (20), 2771--2787, 2006.

\bibitem{Mun86}
D. Mundici, Interpretation of ACF*-algebras in {\L}ukasiewicz sentential calculus, {\em J. Funct. Anal.} 65 15--63, 1986.

\bibitem{munstates}
D. Mundici, Averaging the Truth-value in {\L}ukasiewicz Logic.
{\em Studia Logica} 55(1), 113--127, 1995.

\bibitem{Mu12}
 D. Mundici, {\em Advanced \luk\ calculus and MV-algebras}, Trends in Logic 35, Springer, 2011.

\bibitem{NEG}
C. Noguera, F. Esteva, J. Gispert, Perfect and bipartite IMTL-algebras and
disconnected rotations of prelinear semihoops. {\em Archive for Mathematical Logic}, 44: 869--886, 2005.

\bibitem{Panti}
G. Panti. Invariant measures on free MV-algebras.  {\em Communications in Algebra}, 36(8):2849--2861, 2009.

\bibitem{We}
C. A. Weibel, The K-book: an introduction to algebraic K-theory, {\em Graduate Studies in Mathematics} 145 AMS, 2013.
\end{thebibliography}
\end{document}